\documentclass[11pt]{article}
\usepackage{amsmath}
\usepackage{amsthm}
\usepackage{amsfonts}
\usepackage{amssymb}
\usepackage{enumerate}
\usepackage{verbatim}
\usepackage{graphicx}

\theoremstyle{plain}
\newtheorem{theorem}{Theorem}
\newtheorem{lemma}[theorem]{Lemma}
\newtheorem{corollary}[theorem]{Corollary}
\newtheorem{proposition}[theorem]{Proposition}

\newtheorem*{lemma*}{Lemma~\ref{technical lemma}}
\newtheorem*{claim*}{Claim}

\theoremstyle{definition}
\newtheorem{definition}{Definition}
\newtheorem{conjecture}{Conjecture}

\newtheorem{problem}[conjecture]{Problem}

\theoremstyle{remark}
\newtheorem{remark}{Remark}

\title{Random subcube intersection graphs I: cliques and covering}
\author{Victor Falgas-Ravry  \thanks{Department of Mathematics, Vanderbilt University, 37240 Nashville, Tennessee, USA. Research supported by a grant from the Kempe foundation. Email: {\tt victor.falgas-ravry@vanderbilt.edu}} \and Klas Markstr\"om \thanks{Institutionen f\"or matematik och matematisk statistik, Ume{\aa}  Universitet, 901 87 Ume{\aa}, Sweden. Research supported by a grant from the Swedish Research Council. Email: {\tt klas.markstrom@math.umu.se}}}

\begin{document}
\maketitle
\begin{abstract}
We study \emph{random subcube intersection graphs}, that is, graphs obtained by selecting a random collection of subcubes of a fixed hypercube $Q_d$ to serve as the vertices of the graph, and setting an edge between a pair of subcubes if their intersection is non-empty. Our motivation for considering such graphs is to model `random compatibility' between vertices in a large network.

For both of the models considered in this paper, we determine the thresholds for covering the underlying hypercube $Q_d$ and for the appearance of $s$-cliques. In addition we pose a number of open problems.
\end{abstract}

\section{Introduction}
In this paper we introduce and study two models of \emph{random subcube intersection graphs}. These are random graph models obtained by (i) selecting a random collection of subcubes of a fixed hypercube $Q_d$, to serve as the vertices of the graph, and (ii) setting an edge between a pair of subcubes if their intersection is non-empty. Our basic motivation for studying these random graphs is that they give a model for `random compatibility' between vertices.   Before we make our models mathematically precise let us consider some examples of the applications we have in mind.

A first example is the random $k$-SAT problem, which has attracted the attention of both physicists \cite{Kirkpatrick19941297} and mathematicians \cite{Achlioptas2005759} for many decades.  In this problem we have a set of $n$ Boolean variables  and some number $m$ of Boolean clauses on $k$ variables are 
chosen at random. Each clause forbids exactly one of the $2^k$ possible assignments to the $k$ variables in the clause and the question of interest  is whether there exists an assignment to the $n$ variables which is compatible with all the clauses. It is known that there is a sharp threshold~\cite{MR1678031} for satisfiability with respect to the number $m$ of clauses chosen and that for large $k$~\cite{C-O} this threshold is located at approximately $m=n2^k\ln(k)$.  It is conjectured that for all fixed $k$ there exists a constant $c_k$ such that the satisfiability threshold is  asymptotically $ c_k n$ (a proof of this conjecture for all sufficiently large values of $k$ has been announced recently~\cite{DSS}).

The random $k$-SAT problem is a problem about random subcubes.  Given a clause $C$ the assignments which are incompatible with $C$ are given by the subcube where the $k$ variables in $C$ are assigned the values forbidden by $C$. This is an $(n-k)$-dimensional subcube of the $n$-dimensional cube of all possible assignments, and a collection of clauses is unsatisfiable if and only if the union of the corresponding subcubes contains all vertices of the $n$-cube.    The random $k$-SAT problem is thus equivalent to finding the threshold for covering all the vertices of a cube by a collection of random subcubes.   This example also suggests that a mathematical analysis of the covering problem will be harder for subcubes than for e.g. the usual independent random intersection graph models (where it is analogous to the classical coupon collector problem).

A second example of applications we have in mind comes from social choice theory. Suppose we have a society $V$ which is faced with $d$ political issues, each which can be resolved in a binary fashion. We represent the two policies possible on each issue by $0,1$, and the family of all possible sets of policies by a $d$-dimensional hypercube $Q_d$.

Individual members of the society may have fixed views on some issues, but may be undecided or indifferent on others. We can thus associate to each citizen $v\in V$ a subcube of acceptable policies $f(v)$ in a natural way. The subcube intersection graph $G$ arising from $(V, Q_d, f)$ then represents political agreement within the society: $uv$ is an edge of $G$ if and only if the citizens $u$ and $v$ can agree on a mutually acceptable set of policies.

A key characteristic of subcube intersection graphs is that they possess the \emph{Helly property}: if we have $s$ subcubes $f(v_1), f(v_2), \ldots f(v_s)$ of $Q_d$ which are pairwise intersecting, then their total intersection $\bigcap_{i=1}^s f(v_i)$ is non-empty (this is an easy observation, already made in~\cite{JohnsonMarkstrom12}). A consequence of this fact is that in the model for political agreement described above, $s$-cliques represent $s$-sets of citizens able to agree on a mutually acceptable set of policies and, say, unite their forces to promote a common political platform. This example 
motivates our study of the clique number in (random) subcube intersection graphs.

There are many other examples of compatibility graphs naturally modeled by subcube intersection graphs. Some closely resemble the one above: the work of matrimonial agencies or the assignment of room-mates in the first year at university for instance naturally lead to the study of such compatibility graphs. 
Another class of examples can be found in the medical sciences.  For kidney or blood donations, several parameters must be taken into account to determine whether a potential donor--receiver pair is compatible. Large random subcube intersection graphs provide a way of modeling these compatibility relations over a large pool of donors and receiver, and of identifying efficient matching schemes.

\subsection{The models}
Let us now describe our models more precisely. We begin with some basic definitions and notation.

\begin{definition}[Intersection graphs]
A \emph{feature system} is a triple $(V, \Omega, f)$, where $V$ is a set of vertices, $\Omega$ is a set of \emph{features}, and $f$ is a function mapping vertices in $V$ to subsets of $\Omega$. Given a vertex $v\in V$, we call $f(v)\subseteq \Omega$ its \emph{feature set}. We construct a graph $G$ on the vertex-set $V$ from a feature system $(V, \Omega, f)$ by placing an edge between $u,v \in V$ if their feature sets $f(u), f(v)$ have non-empty intersection. We call $G$ the \emph{intersection graph} of the feature system $(V, \Omega,f)$.
\end{definition}
In this paper we shall study intersection graphs where $\Omega$ and the feature sets $\{f(v): \ v \in V\}$ have some additional structure. Namely, $\Omega$ shall be a high-dimensional hypercube $Q_d$ and the feature sets will consist of subcubes of $Q_d$. 
\begin{definition}[Hypercubes and subcubes]
The \emph{$d$-dimensional hypercube} is the set $Q_d=\{0,1\}^d$. A \emph{$k$-dimensional subcube} of $Q_d$ is a subset obtained by fixing $(d-k)$-coordinates and letting the remaining $k$ vary freely. We may regard subcubes of $Q_d$ as elements of $\{0,1, \star\}^d$, where $\star$ coordinates are free and the $0,1$ coordinates are fixed.
\end{definition}
We shall define two models of random subcube intersection graphs. Both of these are obtained by randomly assigning to each vertex $v \in V$ a feature subcube $f(v)$ of $Q_d$ and then building the resulting intersection graph.
\begin{definition}[Uniform model]
Let $V$ be a set of vertices. Fix $k,d\in \mathbb{N}$ with $k\leq d$. For each $v \in V$ independently select a $k$-dimensional subcube $f(v)$ of $Q_d$ uniformly at random, and set an edge between $u,v \in V$ if $f(u) \cap f(v) \neq \emptyset$. Denote the resulting random subcube intersection graph by $G_{V, d, k}$.
\end{definition}
\begin{definition}[Binomial model]
Let $V$ be a set of vertices. Fix $d\in\mathbb{N}$ and $p\in[0,1]$. For each $v\in V$ independently select a subcube $f(v)\in \{0,1, \star\}^d$ at random by setting $(f(v))_i=\star$ with probability $p$ and $(f(v))_i=0,1$ each with probability $\frac{1-p}{2}$ independently for each coordinate $i \in \{1,\ldots d\}$ (we refer to such a subcube as a \emph{binomial random subcube}). Denote the resulting random subcube intersection graph by $G_{V,d, p}$. 
\end{definition}

\begin{remark}\label{binomial d as intersection of binomial 1}
We may view $G_{V,d,p}$ as the intersection of $d$ independent copies of $G_{V,1,p}$ on a common vertex-set $V$. Indeed an edge $uv$ of $G_{V,d,p}$ is present if and only if for each of the $d$ dimensions of $Q_d$ we have that  $f(u)$ and $f(v)$, seen as vectors, are identical or at least one of them is $\star$. The graph $G_{V,1,p}$ is itself rather easy to visualise: we first randomly colour the vertices in $V$ with colours from $\{0,1, \star\}$, and then remove from the complete graph on $V$ all edges between vertices in colour $0$ and vertices in colour $1$.
\end{remark}

\subsection{Degree distribution, edge-density and relation to other models of random graphs}
Our two models of random subcube intersection graphs bear some resemblance to previous random graph models. To give the reader some early intuition into the nature of random subcube intersection graphs, we invite her to consider the degree distributions and edge-densities found in them, and to contrast them with models of random graphs with similar degree distributions and edge-densities.

Let us first note that in order to get an random model which is both structurally interesting and amenable to asymptotic analysis we typically consider the case where $d \rightarrow  \infty$, and the other parameters are functions of $d$.

The degree of a given vertex in the uniform model $G_{V,d, k}$ is a binomial random variable with parameters $\vert V \vert-1$ and $q$, where $q$ is the probability that two uniformly chosen $k$-dimensional subcubes of $Q_d$ meet. If $k=k(d)=\lfloor \alpha d\rfloor$ for some fixed $\alpha \in (0,1)$, then one can show $q=q(\alpha)= e^{-f(\alpha)d+o(d)}$, where
\begin{align*}
f(\alpha)= &2\log\left(\alpha^{\alpha}(1-\alpha)^{1-\alpha}\right) - \left(\sqrt{(1-\alpha)^2+\alpha^2}-1+\alpha\right)\log\left(\sqrt{(1-\alpha)^2+\alpha^2}-1+\alpha\right)\\ &-2\left(1-\sqrt{(1-\alpha)^2+\alpha^2} \right)\log \left(1-\sqrt{(1-\alpha)^2+\alpha^2} \right)\\
&-\left(\sqrt{(1-\alpha)^2+\alpha^2}-\alpha\right)\log\left(2\sqrt{(1-\alpha)^2+\alpha^2}-2\alpha\right).
\end{align*}
This expression is not, however, terribly instructive.

The quantity $q$ is also the edge-density of $G_{V,d,k}$. When $\vert V\vert=n$, the appropriate random graph to compare and contrast it with is thus an Erd\H{o}s--R\'enyi random graph $G(n,q)$ with edge probability $q$. However
$G_{V, d, k}$ displays some significant \emph{clustering}: our results can be used to show for instance that dependencies between the edges cause triangles to appear well before we see a linear number of edges, in contrast to the Erd\H{o}s--R\'enyi model $G(n,q)$.

The edge-density of the binomial model $G_{V,d,p}$ is easy to compute: it is exactly $\left(1-\frac{(1-p)^2}{2}\right)^d=e^{-d\log \left(\frac{2}{1+2p-p^2}\right)}$. The degree distribution of $G_{V, d, p}$ is more complicated, however. Increasing the dimension of a subcube by $1$ doubles its volume inside $Q_d$, so that larger subcubes expect much larger degrees. The number of feature subcubes from our graph met by a fixed subcube of dimension $\alpha d$ is a binomial random variable with parameters $\vert V\vert-1$ and $\left(\frac{1+p}{2}\right)^{(1-\alpha)d}$. The number of vertices in $V$ whose feature subcubes have dimension $\alpha d$ is itself a binomial random variable with parameters $\vert V \vert$ and $\binom{d}{\alpha d}p^{\alpha d}(1-p)^{(1-\alpha)d}$. As in this paper we will typically be interested in the case where $d$ is large and $V$ has size exponential in $d$, we will expect to see some feature subcubes with dimension much larger or much smaller than $pd$. This will have a noticeable effect on the properties of the graph $G_{V,d, p}$.

\begin{figure}
\includegraphics[scale=0.5]{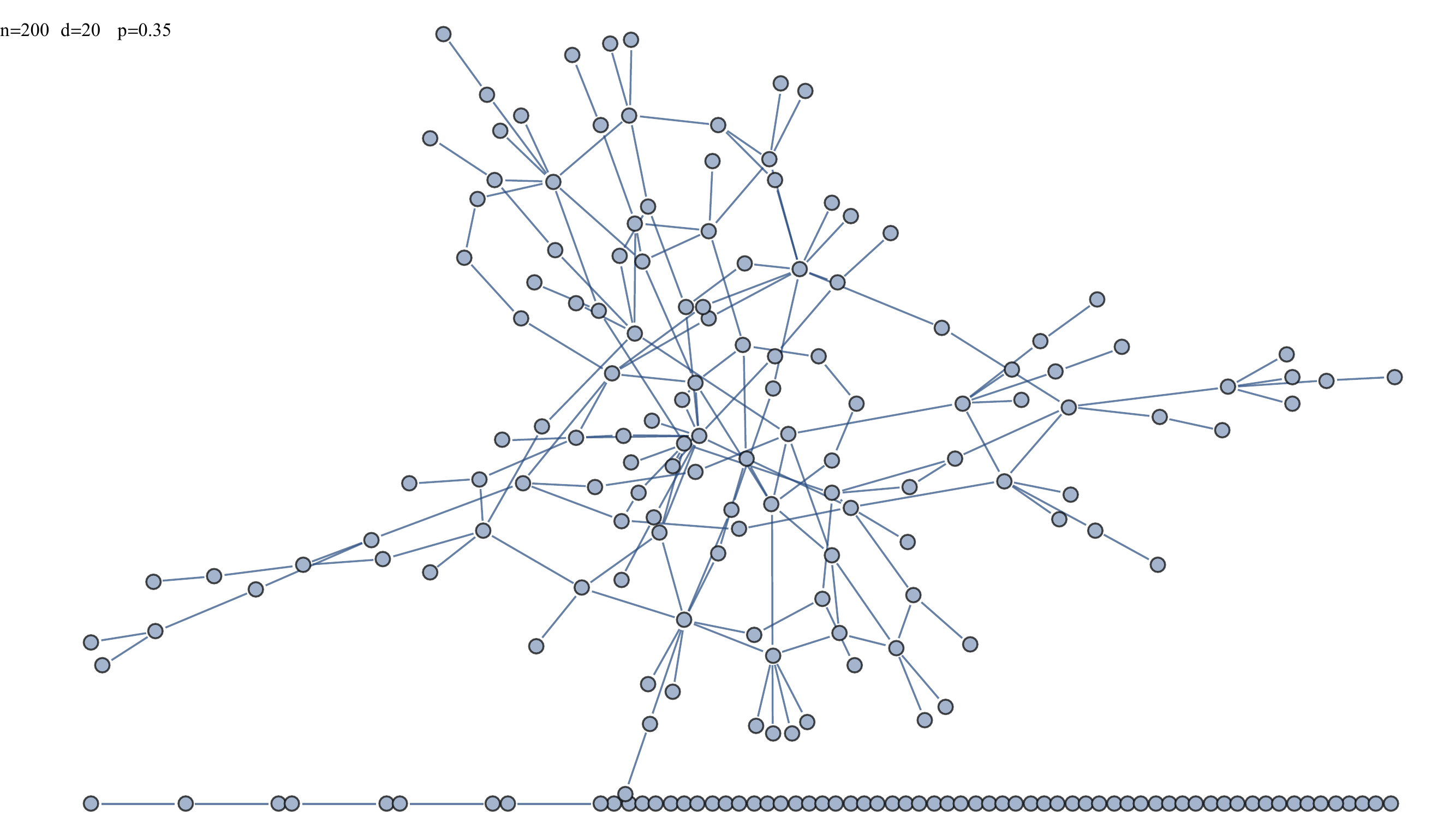}
\caption{An example of the binomial random subcube intersection graph model $G_{V,d,p}$ with $\vert V\vert=200$, $d=20$ and $p=0.35$. The row of vertices at the bottom right are all isolated.}
\end{figure}

Among the random graph models studied in the literature, $G_{V,d, p}$ in many ways resemble the multi-type inhomogeneous random graphs studied in~\cite{BollobasJansonRiordan07},  which also have vertices of several different types and differing edge probabilities, though we should point out there are significant differences. First of all some `types' corresponding to vertices with unusually large or unusually small feature subcubes will have only a sublinear (and random) number of representatives. Secondly, the binomial model shares the clustering behaviour of the uniform model (see Remark~\ref{clustering binomial case}), differentiating it from the models considered in~\cite{BollobasJansonRiordan07}. We note that a further general model for inhomogeneous random graphs with clustering was introduced by Bollob\'as, Janson and Riordan in~\cite{BollobasJansonRiordan11}, for which this second point does not apply.

Finally, let us mention the standard models of \emph{random intersection graphs}. Write $[m]$ for the discrete interval $\{1,2 , \ldots m\}$. In the \emph{binomial random intersection graph model} $\mathcal{G}(V,[m],p)$, each vertex $v \in V$ is independently assigned a random feature set $f(v) \subseteq [m]$. This feature set is obtained by including $j \in [m]$ into $f(v)$ with probability $p$ and leaving it out otherwise independently at random for each feature $j \in [m]$. Edges are then added between all pairs of vertices $u,v \in V$ with $f(u) \cap f(v) \neq \emptyset$ to obtain a random intersection graph on $V$. A variant on this model is to choose feature sets $f(v)$ uniformly at random from the $k$-subsets of $[m]$; this yields the \emph{uniform random intersection graph model} $\mathcal{G}(V,[m], k)$.

While these two random intersection graph models bear some resemblance (in terms of clustering, for example) to our random subcube intersection graph models, there are also some significant differences due to the underlying structure of our feature sets. Let us note amongst other things that random intersection graphs do not have the Helly property, and that the effects on the degree of increasing the size of a feature set by $1$ in a binomial random intersection graph are far less dramatic than the effects of increasing the dimension of a feature subcube by $1$ in a binomial random subcube intersection graph. In particular, the binomial random subcube intersection graph model $G_{V,d,p}$ 
 has a much more dramatic variation of degrees than its non-structured counterpart $\mathcal{G}(V,[m],p)$.

We end this section by noting that there has been some interest in another model of `structured' random intersection graphs, namely \emph{random interval graphs}. The idea here is to associate to each vertex $v \in V$ a feature interval $f(v)=I_v=[a_v,b_v]\subseteq [0,1]$ at random and to set an edge between $u,v \in V$ whenever $I_u \cap I_{v}\neq \emptyset$. Here `at random' means the intervals are generated by independent pairs of uniform $U(0,1)$ random variables, which serve as the endpoints. A $d$-dimensional version of this model also exists, where we associate to each vertex a $d$-dimensional box lying inside $[0,1]^d$. This gives rise to \emph{(random) $d$-box graphs}.

In the setting of interval or $d$-box graphs, we do have the Helly property. The random interval and random $d$-box graph models are however quite different from the random subcube intersection graphs we study in this paper. Indeed, for $d=1$ subcube intersection graphs can be viewed as interval graphs where the feature intervals $I_v$ are restricted to a small set of possible values, for example $[0,1]$, $[0,1/3]$ and $[2/3,1]$ (to correspond to $\star$, $0$ and $1$ respectively). This naturally leads to a very different structure.

\subsection{Previous work on random intersection graphs and subcube intersection graphs}
Subcube intersection graphs were introduced by Johnson and Markstr\"om~\cite{JohnsonMarkstrom12}, with motivation coming from the example in social choice theory we discussed above. They studied cliques in subcube intersection graphs from an extremal perspective, obtaining a number of results on Ramsey- and Tur\'an-type problems and providing a counterpoint to the probabilistic perspective of the work undertaken in this paper.

The random intersection graph models $\mathcal{G}(V,[m],p)$ and $\mathcal{G}(V,[m],k)$ we presented in the previous subsection have for their part received extensive attention from the research community since they were introduced by Karo{\'n}ski, Scheinerman and Singer-Cohen~\cite{KaronskiScheinermanSingerCohen99} and Singer-Cohen~\cite{SingerCohen95}. By now, many results are known on their connectivity~\cite{BlackburnGerke09, GodehardtJaworskiRybarczyk07,Rybarczyk11b,SingerCohen95}, hamiltonicity~\cite{BloznelisRadavicius10,EfthymiouSpirakis05}, component evolution~\cite{Behrisch07,BloznelisJaworskiRybarczyk09,Rybarczyk11b}, clique number~\cite{BloznelisKurauskas13,KaronskiScheinermanSingerCohen99,RybarczykStark10}, independence number~\cite{NikoletseasRaptopoulosSpirakis08}, chromatic number~\cite{BehrischTarazUeckerdt09,NikoletseasRaptopoulosSpirakis09}, degree distribution
\cite{Stark04} and near-equivalence to the Erd\H os--R\'enyi model $\mathcal{G}_{n,p}$ for some range of the parameters~\cite{FillScheinermanSingerCohen00,Rybarczyk11}, amongst other properties. Even more recently, there has been interest in obtaining versions of the results cited above for inhomogeneous random intersection graph models.

Finally there has been some work on random intersection graphs and $d$-box graphs that runs somewhat parallel to the work of Johnson and Markstr\"om and of this paper. From an extremal perspective, sufficient conditions for the existence of large cliques in $d$-box graphs were investigated by Berg, Norine, Su, Thomas and Wollan~\cite{BergNorineSuThomasWollan10} in the context of models for social agreement and approval voting, while random interval graphs were introduced by Scheinerman~\cite{Scheinerman88}, and have been extensively studied~\cite{DiaconisHolmesJanson11, GodehardtJaworski98, Pippenger98,Scheinerman90}.

\subsection{Results of this paper}
In this paper we study the behaviour of the binomial and uniform subcube intersection models when $d$ is large (see Remark~\ref{choice of parameters} below for a discussion of the constant $d$ case). We study two main properties, that of containing a clique of size $s=s(d)$, and that of covering the entirety of the underlying hypercube $Q_d$ with the union $\bigcup_{v\in V}f(v)$ of the feature subcubes.

Both of these properties are closed under the addition of vertices to $V$ (or, equivalently, of subcubes $f(v)$ to the family of feature subcubes). The question is then how large $V$ needs to be for these properties to hold \emph{with high probability} (whp), that it to say with probability tending to $1$ as $d\rightarrow \infty$. In the case of covering, this question can be thought of as a structured variant of the classical coupon collector problem (see the discussion at the beginning of Section~\ref{binomial: at covering}).

Formally, we take a dynamic view of our models: for fixed $p, \alpha \in [0,1]$ we consider a nested sequence of vertex sets $V_1\subset V_2 \subset \ldots$, with $\vert V_n\vert=n$, and corresponding nested sequences of binomial random subcube intersection graphs $B_n=G_{V_n, d, p}$ and uniform random subcube intersection graphs $U_n=G_{V_n, d, \lfloor \alpha d\rfloor}$. 
\begin{definition}
Let $\mathcal{P}$ be a property of subcube intersection graphs that is closed with respect to the addition of vertices. The \emph{hitting time} $N^b_{\mathcal{P}} = N^b_{\mathcal{P}}(d,p)$ for $\mathcal{P}$ for the binomial sequence $(B_n)_{n\in\mathbb{N}}$ is 
\[N^b_{\mathcal{P}}:=\min\left\{n \in \mathbb{N}: \   B_n \in \mathcal{P} \right\}.\]
Similarly, we define the \emph{hitting time} $N^u_{\mathcal{P}} = N^u_{\mathcal{P}}(d,\alpha)$ for $\mathcal{P}$ for the uniform sequence $(U_n)_{n\in \mathbb{N}}$ is 
\[N^u_{\mathcal{P}}:=\min\left\{n \in \mathbb{N}: \   U_n \in \mathcal{P} \right\}.\]
\end{definition}
In this paper, we restrict our attention to the binomial model $G_{V,d,p}$ with $p\in (0,1)$ fixed, and to the uniform model $G_{V, d, k}$ with $k=k(d)=\lfloor \alpha d \rfloor$ for $\alpha \in (0,1)$ fixed. In both cases, the interesting behaviour occurs when $\vert V\vert = e^{xd}$ for $x$ bounded away from $0$. We thus typically use this number $x$ as a parameter, rather than the actual number $n=\vert V\vert$ of vertices in the graph. Our aim is to establish concentration of the exponent of the hitting time. We thus make the following definitions:
\begin{definition}
Let $\mathcal{P}$ be a property of subcube intersection graphs that is closed with respect to the addition of vertices. A real number $t\geq 0$ is a \emph{threshold} for $\mathcal{P}$ in the binomial model (with parameter $p$) if
\[\frac{\log \left(N^b_{\mathcal{P}}(d,p)\right)}{d} \rightarrow t\]
in probability as $d\rightarrow \infty$. Similarly, we say that $t\geq 0$ is a \emph{threshold} for $\mathcal{P}$ in the uniform model (with parameter $k=\lfloor\alpha d\rfloor$) if 
\[\frac{\log \left(N^u_{\mathcal{P}}(d,\alpha)\right)}{d} \rightarrow t\]
in probability as $d\rightarrow \infty$.
\end{definition}
In other words, $t_{\mathcal{P}}>0$ is a threshold for the binomial model if for any sequence of vertex sets $V=V(d)$
\[\lim_{d\rightarrow \infty} \mathbb{P}(G_{V, d, p}\in \mathcal{P})=\left\{\begin{array}{ll}
0 & \textrm{if $\vert V(d)\vert \leq e^{xd}$ for some $x<t$,}\\
1& \textrm{if $\vert V(d)\vert \geq e^{xd}$ for some $x>t$,}\end{array}  \right.\]
and a similar statement holds in the case of the uniform model.

Our main results are determining the thresholds for the appearance of cliques and for covering the ambient hypercube in both the binomial and the uniform model. 
In most cases we also give some slightly more precise information about the thresholds, going into the lower order terms. We show in particular that around the covering threshold, the clique number of our models undergoes a transition: below the covering threshold, the clique number is whp of order $O(1)$; close to the covering threshold, it is whp of order a power of $d$; finally above the covering threshold, it is whp of order exponential in $d$.

Our paper is structured as follows. In Section~\ref{binomialsection}, we state and prove our results for the binomial model. In Section~\ref{uniformsection} we use these to obtain our results for the uniform model. Finally in Section~\ref{remarkssection} we discuss small $p$ and large $p$ behaviour, and end with a number of open problems.
\begin{remark}\label{choice of parameters}
In this paper, as we have said, we are focussing on our models in the case where $d\rightarrow\infty$. What happens when $d$ is fixed and the number of vertices goes to infinity? In some applications, this may be a more relevant choice of parameters. The asymptotic behaviour in this case is however much simpler. Indeed, let $d$ be fixed and let $U$ be the family of all subcubes of $Q_d$. We may define a subcube intersection graph $G^d$ on $U$ by setting an edge between two subcubes if their intersection is non-empty. The binomial model $G_{\vert V\vert ,d,p}$ is then just a \emph{random weighted blow-up} of $G^d$: each vertex $v$ of $G^d$ is replaced by a clique with a random size $c_v$, where $\sum_v c_v =\vert V\vert $, and by standard Chernoff bounds $c_v=(1+o(1))p_v\vert V\vert$ for every $v$, where $p_v$ is the probability that a binomial random cube is equal to $v$.  
Thus knowledge of the  finite graph $G^d$ will give us essentially all the information we could require concerning the graph $G_{V,d,p}$ as $\vert V \vert \rightarrow \infty$. 
  
Similarly, the asymptotic behaviour of $G_{V,d,k}$ for $d$ fixed can be inferred from the properties of the intersection graph $G_k^d$ of the $k$-dimensional subcubes of $Q_d$. We note that this latter graph $G_k^d$ may be thought of as a subcube analogue of (the complement of) a Kneser graph, and may be interesting in its own right as a graph theoretical object; this is however outside the scope of the present paper.
\end{remark}

\subsection{A note on approximations and notation}
Throughout this paper we shall need some standard approximations. In particular we shall often use 
$\binom{m}{\beta m}=e^{-m\log \left(\beta^{\beta}(1-\beta)^{1-\beta}\right)+O(\log m) }$ (for $\beta \in (0,1)$ fixed)
and $(1-\eta)^m=e^{-\eta m + O(\eta^2 m)}$ (for $\eta=o(1)$). We will also use  the notation $f(n) \ll g(n)$ to denote that $f(n) = o(g(n))$, and $f(n) \gg g(n)$ to denote that $g(n) = o(f(n))$.

\section{The binomial model}\label{binomialsection}

\subsection{Summary}
In this section, we prove our results for the binomial model. Denote by $K_s$ the complete graph on $s$ vertices. Recall that the \emph{clique number} $\omega(G)$ of a graph $G$ is the largest $s$ such that $G$ contains a copy of $K_s$ as a subgraph.

\begin{theorem}\label{binomial s<<d/log d}
Let $p \in (0,1)$ and $\varepsilon>0$  fixed. Let $s=s(d)$ be a sequence of non-negative integers with $s(d)=o\left(\frac{d}{\log d}\right)$. Set 
\[t_{K_s}(p)= -\frac{1}{s} \log \left(2\left(\frac{1+p}{2}\right)^s- p^s \right).\]
Then for every sequence of vertex sets $V(d)$ with $x(d)=\frac{1}{d} \log \vert V(d) \vert$,
\[\lim_{d\rightarrow \infty}\mathbb{P}\left(G_{V,d, p} \textrm{ contains a }K_s\right)=\left\{\begin{array}{ll} 0 & \textrm{if } x(d) \leq t_{K_s}(p)+ \frac{\log s}{d} - \varepsilon\frac{ \log d}{d}\\
1 & \textrm{if } x(d) \geq t_{K_s}(p)+ \frac{2\log s}{d} + \varepsilon\frac{ \log d}{d}.\end{array}  \right. \]
\end{theorem}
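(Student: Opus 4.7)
The plan is to apply the first and second moment method to $X$, the number of $s$-cliques in $G_{V,d,p}$. By the Helly property mentioned in the paper, an $s$-subset $S\subseteq V$ spans a $K_s$ iff $\bigcap_{v\in S}f(v)\neq\emptyset$. A per-coordinate calculation (the binomial values in $\{0,1,\star\}$ of the $s$ subcubes must avoid containing both a $0$ and a $1$) gives that the probability of $s$ binomial random subcubes intersecting is $q_s^d$, where with $A=(1+p)/2$,
$$q_s := 2A^s-p^s = e^{-s\,t_{K_s}(p)}.$$
Thus $\mathbb{E}[X]=\binom{n}{s}q_s^d$ with $n=|V|$. For the $0$-statement, combining $\binom{n}{s}\leq n^s/s!$ with Stirling's formula yields $\log\mathbb{E}[X]\le s(x-t_{K_s}(p))d - s\log s + s + O(\log s)$. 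Under the hypothesis $x\leq t_{K_s}(p)+\log s/d - \varepsilon\log d/d$ this is at most $-s\varepsilon\log d + O(s)\to-\infty$, and Markov's inequality finishes.

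For the $1$-statement I turn to the second moment method. Let $P_j$ denote the probability that the values (in a single coordinate) of two $s$-subsets $S,T\subseteq V$ with $|S\cap T|=j$ are both compatible. Conditioning on the $j$ shared values gives
$$P_j = p^j q_{s-j}^2 + 2(A^j-p^j) A^{2(s-j)},$$
so $\mathbb{E}[X^2]=\sum_{j=0}^s\binom{n}{s}\binom{s}{j}\binom{n-s}{s-j}P_j^d$, and the ratio $\mathbb{E}[X^2]/(\mathbb{E}[X])^2$ decomposes as $(1+o(1))+\sum_{j\ge 1}T_j$, with the combinatorial bound $T_j\le \frac{s^{2j}}{j!\,n^j}(P_j/q_s^2)^d$.

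The technical heart of the argument is the log-convexity estimate
$$P_j\le q_s^{\,2-j/s}\quad (0\le j\le s),$$
which, since $P(0)=q_s^2$ and $P(s)=q_s$, reduces to showing that $j\mapsto \log P(j)$ is convex on $[0,s]$. Computing $(\log P)''(j)$ directly and substituting $a=r^j$, $b=r^{2s-j}$, $t=r^s=\sqrt{ab}$ (with $r=p/A\in(0,1)$), log-convexity reduces to the numerator inequality
$$8ab+(2-4t)(2a+b)\ge 0.$$
This is immediate for $t\le 1/2$; for $t\in(1/2,1]$, noting that $2a+b$ is maximized over $\{a\in[t,1],\ ab=t^2\}$ at $(a,b)=(1,t^2)$ with value $2+t^2$, it suffices to verify the cubic inequality $(2+t^2)(2t-1)\le 4t^2$, i.e., $p(t):=2t^3-5t^2+4t-2\le 0$. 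This follows from elementary calculus: $p'(t)=2(3t-2)(t-1)$, so $p$ attains its maximum on $[1/2,1]$ at $t=2/3$ with value $-26/27<0$.

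Given the key inequality, combined with the hypothesis $x-t_{K_s}(p)\ge 2\log s/d + \varepsilon\log d/d$, I obtain
$$T_j\le \frac{s^{2j}}{j!}\,e^{-j(x-t_{K_s}(p))d}\le \frac{d^{-j\varepsilon}}{j!},$$
so $\sum_{j\ge 1}T_j\le e^{d^{-\varepsilon}}-1 = O(d^{-\varepsilon})\to 0$. Since a parallel first moment calculation shows $\mathbb{E}[X]\to\infty$ under the same hypothesis, Paley--Zygmund yields $\mathbb{P}(X\ge 1)\to 1$. The main obstacle is establishing the log-convexity inequality above; once that is in place, the rest is routine first and second moment bookkeeping.
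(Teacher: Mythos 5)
Your proposal is correct, and its overall architecture coincides with the paper's: first and second moments of the $s$-clique count, decomposition of $\mathbb{E}X^2$ over overlap sizes $j$, and reduction of the variance bound to the single inequality $P_j\le q_s^{2-j/s}$. Your conditional expression $P_j=p^jq_{s-j}^2+2(A^j-p^j)A^{2(s-j)}$ expands to exactly the paper's $b_j^{1/d}=2A^{2s-j}+2A^{2s-2j}p^j-4A^{s-j}p^s+p^{2s-j}$, and your target inequality is precisely Lemma~\ref{technical lemma} (divide both sides of that lemma by $A^{-s(2s-i)}$ to pass between the two formulations). Where you genuinely diverge is in the proof of that inequality. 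The paper fixes the overlap $i$ and shows by a direct derivative computation (Appendix A) that $g(y)=(2-y^s)^{2s-i}/(2+2y^i-4y^s+y^{2s-i})^s$ is decreasing in $y\in[0,1]$ with $g(1)=1$; this requires locating the critical point of an auxiliary polynomial $h_2$. You instead fix $y=2p/(1+p)$ and show that $j\mapsto\log P_j$ is convex on the real interval $[0,s]$, so that the chord through the endpoint values $P_0=q_s^2$ and $P_s=q_s$ gives the bound; after the substitution $a=r^j$, $b=r^{2s-j}$, $ab=t^2$ (and the observation that $2a+t^2/a$ is increasing on $[t,1]$), convexity reduces to the single cubic inequality $2t^3-5t^2+4t-2\le 0$ on $[1/2,1]$, which your calculus verifies. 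I checked each of these reductions and they are sound. Your route is shorter and arguably more illuminating than Appendix A, since it isolates the endpoint data $P_0=q_s^2$, $P_s=q_s$ as the only input beyond convexity; the remaining bookkeeping in both moments matches the paper's up to harmless constant factors (e.g.\ your bound on $\binom{n-s}{s-j}/\binom{n}{s}$ loses a factor $2$ for large $d$, which does not affect the conclusion $\sum_{j\ge1}T_j=O(d^{-\varepsilon})$).
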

\begin{corollary}\label{binomial s constant clique threshold}
Let $p\in (0,1)$ and $s\in \mathbb{N}$ be fixed. The threshold for the appearance of $s$-cliques in $G_{V,d,p}$ is
\[t_{K_s}(p)= \log \frac{2}{1+p} - \frac{1}{s} \log \left(2 - \left(\frac{2p}{1+p}\right)^s \right).\]
\end{corollary}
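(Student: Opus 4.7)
The plan is to derive the corollary as an essentially immediate consequence of Theorem~\ref{binomial s<<d/log d}, by specializing to $s$ constant and then performing an algebraic rewrite.

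First, since $s \in \mathbb{N}$ is fixed, we certainly have $s = s(d) = o(d/\log d)$, so Theorem~\ref{binomial s<<d/log d} applies. With $s$ constant, the lower bound $t_{K_s}(p) + \frac{\log s}{d} - \varepsilon \frac{\log d}{d}$ and the upper bound $t_{K_s}(p) + \frac{2\log s}{d} + \varepsilon\frac{\log d}{d}$ on $x(d)$ both tend to $t_{K_s}(p)$ as $d \to \infty$. By the definition of threshold given in the introduction, this yields $\frac{1}{d}\log N^b_{K_s}(d,p) \to t_{K_s}(p)$ in probability, so $t_{K_s}(p)$ is a threshold for the appearance of $s$-cliques in the binomial model.

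It then remains to check that the quantity $t_{K_s}(p) = -\frac{1}{s}\log\bigl(2((1+p)/2)^s - p^s\bigr)$ given by Theorem~\ref{binomial s<<d/log d} is equal to the claimed expression. Factoring $((1+p)/2)^s$ out of the argument of the logarithm, I would write
\[
2\left(\frac{1+p}{2}\right)^s - p^s = \left(\frac{1+p}{2}\right)^s\left(2 - \left(\frac{2p}{1+p}\right)^s\right),
\]
so that taking $-\frac{1}{s}\log(\cdot)$ and splitting the logarithm gives
\[
t_{K_s}(p) = -\log\frac{1+p}{2} - \frac{1}{s}\log\left(2 - \left(\frac{2p}{1+p}\right)^s\right) = \log\frac{2}{1+p} - \frac{1}{s}\log\left(2 - \left(\frac{2p}{1+p}\right)^s\right),
\]
as claimed.

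There is no genuine obstacle here: the corollary is essentially a restatement of Theorem~\ref{binomial s<<d/log d} in the constant-$s$ regime, combined with a routine algebraic manipulation to bring out the factor $\log\frac{2}{1+p}$ (which the reader can interpret as the edge-density exponent $-\log\bigl(1 - (1-p)^2/2\bigr)$ with $p$ replaced in a particular way) and isolate the $s$-dependent correction. The only minor point worth noting is that one should verify the argument of the logarithm is positive for all $p \in (0,1)$ and $s \geq 1$, which is clear since $\bigl(\frac{2p}{1+p}\bigr)^s < 1 < 2$.
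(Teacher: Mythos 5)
Your proposal is correct and matches the paper's (implicit) derivation: the corollary follows immediately from Theorem~\ref{binomial s<<d/log d} since for constant $s$ the error terms $\frac{\log s}{d}$, $\frac{2\log s}{d}$ and $\varepsilon\frac{\log d}{d}$ all vanish as $d\to\infty$, and the algebraic rewrite via factoring out $\left(\frac{1+p}{2}\right)^s$ is exactly the intended identification of the two expressions for $t_{K_s}(p)$.
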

\begin{remark}\label{clustering binomial case}
As we shall see in the proof of Theorem~\ref{binomial s<<d/log d}, from the moment it becomes non-zero, the number of edges in $G_{V,d,p}$ remains concentrated about its expectation $e^{2(x-t_{K_2})d +o(d)}$. 
If there was no clustering in $G_{V,d,p}$, that is, if cliques appeared no earlier than they would in the Erd\H{o}s--R\'enyi model with parameter $e^{-2t_{K_2}d}$, then we would expect $s$-cliques to appear roughly when $x=(s-1)t_{K_2}$.

However, it is the case that $t_{K_s}<(s-1)t_{K_2}$ for all $p\in [0,1)$ and all $s\geq 3$. This is an exercise in elementary calculus.
In particular, $s$-cliques appear much earlier than we would expect them to given the edge-density of our binomial random subcube intersection graphs. Indeed, letting $p\rightarrow 0$, we have by Corollary~\ref{binomial s constant clique threshold} that for $s\geq 3$
\[t_{K_s}(p)= \left(1-\frac{1}{s}\right)\log 2  -p +O(p^2),\]
while the threshold for having a linear number of edges is $2t_{K_2}=\log 2 -2p +O(p^2)$, which is strictly larger provided $p$ is chosen sufficiently small. Thus for every $s\in \mathbb{N}$, there exists $p_s\in [0,1]$ such that for all fixed $p\in [0, p_s]$, whp we see $s$-cliques appear in $G_{V,d,p}$ before we have a linear number of edges. This stands in stark contrast to the situation for the Erd\H{o}s--R\'enyi model.
\end{remark}

\begin{theorem}\label{binomial covering threshold}
Let $p \in (0,1)$ and $\varepsilon >0$ be fixed. Let $V=V(d)$ be a sequence of vertex sets with $x(d)= \frac{1}{d} \log \vert V(d) \vert$. Then, for the binomial model $G_{V,d,p}$,
\[\lim_{d\rightarrow \infty}\mathbb{P}\left(\bigcup_{v \in V} f(v)=Q_d\right)=\left\{\begin{array}{ll} 0 & \textrm{if } x(d) \leq \log \frac{2}{1+p}+\frac{\log d}{d}+\frac{\log \left(\log 2-\varepsilon\right)}{d}\\
1 & \textrm{if } x(d) \geq \log \frac{2}{1+p}+\frac{ \log d}{d}+\frac{\log \left(\log 2+\varepsilon\right)}{d}.\end{array}  \right.\]
\end{theorem}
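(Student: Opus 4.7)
The plan is to apply the first and second moment methods to $X = |\{w \in Q_d : w \notin \bigcup_{v \in V} f(v)\}|$, the number of uncovered vertices of $Q_d$. Write $n := |V(d)|$ and $q := \left(\tfrac{1+p}{2}\right)^d$. For a fixed $w \in Q_d$, the event $w \in f(v)$ requires each coordinate of $f(v)$ to be either $\star$ or equal to $w_i$, which has probability $p + \tfrac{1-p}{2} = \tfrac{1+p}{2}$ independently per coordinate, so $\mathbb{P}(w \in f(v)) = q$; by independence across $v$, $\mathbb{E}[X] = 2^d(1-q)^n$. The threshold $\log\tfrac{2}{1+p}$ is calibrated so that at the critical $n$ one has $nq = e^{d(x(d) - \log(2/(1+p)))}$, which equals $(\log 2 \pm \varepsilon) d$ in the two regimes of the theorem.

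For the 1-statement, the hypothesis on $x(d)$ gives $nq \geq (\log 2 + \varepsilon)d$, hence $\mathbb{E}[X] \leq 2^d e^{-nq} \leq e^{-\varepsilon d}$; Markov's inequality yields $\mathbb{P}(X \geq 1) \to 0$. For the 0-statement, $nq \leq (\log 2 - \varepsilon)d$ combined with $q \to 0$ gives $\mathbb{E}[X] \geq 2^d e^{-nq(1+o(1))} \geq e^{\varepsilon d /2}$, so the first moment diverges and I apply Paley--Zygmund. For $w \neq w'$ at Hamming distance $h$, a single $f(v)$ contains both iff it has $\star$ on the $h$ disagreeing coordinates (probability $p^h$) and matches on the rest (probability $((1+p)/2)^{d-h}$), yielding joint probability $qr_h$ where $r_h := \left(\tfrac{2p}{1+p}\right)^h$. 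Inclusion--exclusion and independence across $v$ then give $\mathbb{P}(w, w' \text{ both uncovered}) = (1 - q(2 - r_h))^n$, and using the identity $1 - q(2-r_h) = (1-q)^2 + q(r_h - q)$,
\[
\frac{\mathbb{E}[X^2]}{(\mathbb{E}[X])^2} = \frac{1}{\mathbb{E}[X]} + 2^{-d}\sum_{h=1}^{d}\binom{d}{h}\left(1 + \frac{q(r_h - q)}{(1-q)^2}\right)^{n}.
\]
The first term is $o(1)$, so the task reduces to showing the sum equals $1 + o(1)$.

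The main obstacle is bounding this off-diagonal sum. Using $(1+y)^n \leq e^{ny}$ and $r_h - q \leq r_h$, each summand is at most $2^{-d}\binom{d}{h}\exp\bigl(nqr_h(1+o(1))\bigr)$. The decisive feature is the geometric decay $r_h = e^{-c(p)h}$ with $c(p) := \log\tfrac{1+p}{2p} > 0$ (strict, because $p < 1$). I split at $h^\star := \lceil 2\log d / c(p)\rceil$. For $h \geq h^\star$, $nq r_h \leq (\log 2)d \cdot d^{-2} = o(1)$, so the exponential factor is $1 + o(1)$ and this tail contributes at most $(1+o(1))\sum_{h\geq 1} 2^{-d}\binom{d}{h} \leq 1 + o(1)$. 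For $1 \leq h < h^\star$, the bound $r_h \leq r_1 = \tfrac{2p}{1+p} < 1$ combined with $\binom{d}{h} \leq d^{h^\star} = e^{O(\log^2 d)}$ yields each summand bounded by $\exp\bigl(-d \log 2 \cdot \tfrac{1-p}{1+p} + o(d)\bigr)$, which is exponentially small; multiplying by the $O(\log d)$ terms in this range still gives $o(1)$. The delicate point is handling the two ends simultaneously: the long tail has very weak correlations but $\Theta(2^d)$ pairs, while the short head has the strongest correlations (pairs whose uncovering events are almost comonotone) yet few pairs; the gap $r_1 < 1$, which quantifies how quickly correlation decays with Hamming distance, must outweigh the combinatorial factor $\binom{d}{h}$ on the head, and this is precisely what the hypothesis $p < 1$ buys us. Once the sum is shown to be $1 + o(1)$, Paley--Zygmund gives $\mathbb{P}(X \geq 1) \to 1$.
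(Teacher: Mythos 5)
Your proof is correct and follows essentially the same route as the paper: first moment plus Markov for the 1-statement, and for the 0-statement a second moment over pairs of points of $Q_d$ indexed by their Hamming distance $h$, exploiting the geometric decay $r_h=\left(\tfrac{2p}{1+p}\right)^h$ of the pair correlations. The only difference is where the sum is split --- you cut at $h^\star=\Theta(\log d)$ and kill the short-distance terms using the gap $1-r_1=\tfrac{1-p}{1+p}>0$, whereas the paper cuts at $\eta d$ and beats $\binom{d}{\eta d}$ against $e^{-\varepsilon d}$ via an entropy inequality; both variants are valid.
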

\begin{corollary}
Let $p \in (0,1)$ be fixed. Then the threshold for covering the ambient hypercube $Q_d$ with the feature subcubes from $G_{V,d,p}$ is
\[t_{\textrm{cover}}(p)= \log \frac{2}{1+p}.\]
\end{corollary}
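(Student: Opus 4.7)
The plan is to deduce this corollary directly from Theorem~\ref{binomial covering threshold} by observing that the two lower-order correction terms in the theorem's bounds vanish as $d\to\infty$. Indeed, to show $t_{\textrm{cover}}(p)=\log\frac{2}{1+p}$ is a threshold in the sense of the definition, I need to verify that for every sequence $V=V(d)$ with exponent $x(d)=\frac{1}{d}\log|V(d)|$:
\begin{itemize}
\item if $x(d)\leq x$ for some fixed $x<\log\frac{2}{1+p}$, then $\mathbb{P}(\bigcup_v f(v)=Q_d)\to 0$;
\item if $x(d)\geq x$ for some fixed $x>\log\frac{2}{1+p}$, then $\mathbb{P}(\bigcup_v f(v)=Q_d)\to 1$.
\end{itemize}

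For the first item, fix such an $x$ and set $\delta:=\log\frac{2}{1+p}-x>0$. Pick any $\varepsilon\in(0,\log 2)$ (for example $\varepsilon=\frac{1}{2}\log 2$) so that $\log(\log 2-\varepsilon)$ is a finite constant. Since $\frac{\log d}{d}+\frac{\log(\log 2-\varepsilon)}{d}\to 0$, there exists $d_0$ such that for all $d\geq d_0$ this correction exceeds $-\delta$, giving
\[x(d)\leq x=\log\frac{2}{1+p}-\delta\leq \log\frac{2}{1+p}+\frac{\log d}{d}+\frac{\log(\log 2-\varepsilon)}{d}.\]
Theorem~\ref{binomial covering threshold} then yields $\mathbb{P}(\bigcup_v f(v)=Q_d)\to 0$. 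The second item is symmetric: for $x>\log\frac{2}{1+p}$, set $\delta:=x-\log\frac{2}{1+p}>0$, pick any $\varepsilon>0$, and observe that for $d$ large enough, $\frac{\log d}{d}+\frac{\log(\log 2+\varepsilon)}{d}<\delta$, so the hypothesis $x(d)\geq\log\frac{2}{1+p}+\frac{\log d}{d}+\frac{\log(\log 2+\varepsilon)}{d}$ of Theorem~\ref{binomial covering threshold} is satisfied, giving covering whp.

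There is no substantive obstacle here: the whole content of the corollary is already in the theorem, and the only task is to check that the lower-order terms $\tfrac{\log d}{d}$ and $\tfrac{\log(\log 2\pm\varepsilon)}{d}$ are $o(1)$, which is immediate. The $\varepsilon$ in the theorem is a free parameter that can be fixed to any convenient positive value less than $\log 2$ (for the lower bound) or any positive value (for the upper bound) once $x$ is given.
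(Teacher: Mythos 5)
Your proposal is correct and matches the paper's approach: the corollary is stated as an immediate consequence of Theorem~\ref{binomial covering threshold}, and the only content is exactly the routine check you carry out, namely that the correction terms $\frac{\log d}{d}+\frac{\log(\log 2\pm\varepsilon)}{d}$ are $o(1)$ so that any fixed $x$ on either side of $\log\frac{2}{1+p}$ eventually satisfies the corresponding hypothesis of the theorem.
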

\begin{remark}\label{covering as limit of clique thresholds} $\lim_{s \rightarrow \infty} t_{K_s}(p)=t_{\textrm{cover}}(p)$.
\end{remark}

\begin{theorem}\label{binomial s>>d/logd}
Let $p\in (0,1)$ and $\varepsilon>0$ be fixed, and let $s=s(d)$ be a sequence of integers with $s\gg d/\log d$.
Then for every sequence of vertex sets $V(d)$ with $x(d)=\frac{1}{d}\log \vert V(d) \vert$,
\[\lim_{d\rightarrow \infty}\mathbb{P}\left(G_{V,d, p} \textrm{ contains a }K_s\right)=\left\{\begin{array}{ll} 0 & \textrm{if } x(d) \leq \log\frac{2}{1+p}+ \frac{\log s}{d} - \frac{\varepsilon \log d}{d}\\
1 & \textrm{if } x(d) \geq \log\frac{2}{1+p}+ \frac{\log s}{d} + \frac{\varepsilon}{d}.  \end{array}\right. \]
Further, if $\frac{s(d)}{d}\rightarrow \infty$ as $d\rightarrow \infty$, then we may improve the lower bound on the appearance of $s$-cliques to $x(d)\leq \log\frac{2}{1+p}+ \frac{\log s}{d} - \frac{\varepsilon}{d}$.
\end{theorem}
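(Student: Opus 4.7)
The plan is to exploit the Helly property of subcubes to reduce the clique problem to a maximum-of-binomials problem. For $x \in Q_d$, let $Y_x := |\{v \in V : x \in f(v)\}|$. By Helly, an $s$-subset $S \subseteq V$ spans a $K_s$ in $G_{V,d,p}$ if and only if $\bigcap_{v \in S} f(v) \neq \emptyset$, which happens if and only if some point $x \in Q_d$ lies in $f(v)$ for every $v \in S$. Thus
\[G_{V,d,p} \supseteq K_s \iff \max_{x \in Q_d} Y_x \geq s.\]
A coordinate-by-coordinate calculation gives $\mathbb{P}(x \in f(v)) = ((1+p)/2)^d$, so $Y_x$ is Binomial with parameters $|V|=e^{xd}$ and $\pi := ((1+p)/2)^d$, and its mean satisfies $\mu = |V|\pi = s \cdot e^{d(x - \log(2/(1+p)) - \log s/d)}$. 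The critical window is therefore $x \approx \log(2/(1+p)) + \log s/d$, where $\mu = \Theta(s)$. Note that $s \to \infty$ (since $s \gg d/\log d$ and $d \to \infty$), so Chernoff-type concentration is effective.

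For the probability-$1$ direction, I fix a single point $x_0 \in Q_d$. If $x \geq \log(2/(1+p)) + \log s/d + \varepsilon/d$ then $\mu \geq s e^\varepsilon$, and by the standard Chernoff lower-tail inequality $\mathbb{P}(Y_{x_0} < s) \leq \exp\bigl(-(1-e^{-\varepsilon})^2 se^\varepsilon/2\bigr) = o(1)$. Since $\mathbb{P}(\max_x Y_x \geq s) \geq \mathbb{P}(Y_{x_0} \geq s)$, this gives the whp existence of a $K_s$.

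For the probability-$0$ direction, I combine a union bound
\[\mathbb{P}\bigl(\max_x Y_x \geq s\bigr) \leq 2^d\, \mathbb{P}(Y_{x_0} \geq s)\]
with Chernoff's upper-tail inequality $\mathbb{P}(Y_{x_0} \geq s) \leq (\mu/s)^s e^{s - \mu}$. Writing $\delta_d := \log(2/(1+p)) + \log s/d - x \geq 0$ for the gap below the threshold, we have $\mu = s e^{-d\delta_d}$ and the union bound exponent becomes
\[d \log 2 - sd\delta_d + s(1 - e^{-d\delta_d}).\]
In the general regime $\delta_d \geq \varepsilon\log d / d$ (so $d\delta_d \geq \varepsilon \log d \to \infty$), this is $d\log 2 - s\varepsilon\log d\,(1+o(1))$, which tends to $-\infty$ precisely because $s \gg d/\log d$. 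In the improved sub-case with $s/d \to \infty$ and $\delta_d \geq \varepsilon/d$ (so $d\delta_d \geq \varepsilon$ is only bounded), the Chernoff exponent becomes $-s\phi(\varepsilon) + o(s)$ with $\phi(\varepsilon) := \varepsilon - 1 + e^{-\varepsilon} > 0$, and this dominates $d\log 2$ thanks to $s/d \to \infty$.

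The main obstacle is not conceptual but a matter of carefully tracking the two Chernoff regimes: the upper-tail exponent for $Y_{x_0}$ behaves like $-sd\delta_d$ when $d\delta_d \to \infty$, but only like $-s\phi(d\delta_d) = \Theta\bigl(s(d\delta_d)^2\bigr)$ when $d\delta_d = O(1)$. This asymmetry is exactly what forces the extra $\log d$ factor in the generic lower bound and permits its removal in the sub-case $s/d \to \infty$, matching the claimed dichotomy in the theorem.
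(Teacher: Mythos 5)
Your proof is correct, and it takes a partly different route from the paper's. Both arguments rest on the same Helly reduction (a $K_s$ exists if and only if some point of $Q_d$ lies in at least $s$ feature subcubes), but you then work exclusively with the counts $Y_x\sim\mathrm{Bin}\left(\vert V\vert,\left(\tfrac{1+p}{2}\right)^d\right)$ and Chernoff bounds. For the $1$-statement the paper instead shows that the total volume $\mathrm{Vol}[n]=\sum_v\vert f(v)\vert$ concentrates via Chebyshev and invokes pigeonhole; your single-point lower-tail bound $\mathbb{P}(Y_{x_0}<s)\leq\exp\left(-(1-e^{-\varepsilon})^2e^{\varepsilon}s/2\right)=o(1)$ (valid since $s\to\infty$) is shorter and equally rigorous, though the paper's version has the side benefit of establishing concentration of $\mathrm{Vol}[n]$ itself, which it exploits in a subsequent remark. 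For the $0$-statement the paper splits into two cases --- a first-moment bound on the number of $s$-cliques when $s=O(d)$, and a union bound over $Q_d$ with a Chernoff upper tail when $s\gg d$ --- whereas you handle both regimes with the single estimate $2^d(\mu/s)^se^{s-\mu}$; your analysis of the rate function $\phi(t)=t-1+e^{-t}$ in its two regimes ($\phi(t)\sim t$ as $t\to\infty$ versus $\phi(t)=\Theta(t^2)$ for bounded $t$) explains cleanly why the generic bound needs the $\varepsilon\log d/d$ margin while the case $s/d\to\infty$ tolerates $\varepsilon/d$, which is exactly the dichotomy in the statement. The exponents work out as you claim in all regimes, so the argument is complete.
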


Theorem~\ref{binomial s<<d/log d} is proved in Section~\ref{binomial: below covering}, where in addition we prove some key results on the dimension of the feature subcubes of the vertices in the first $s$-clique to appear in our graph. These will be needed in Section~\ref{uniformsection} when we study the uniform model. Theorem~\ref{binomial covering threshold} is proved in Section~\ref{binomial: at covering}, while Theorem~\ref{binomial s>>d/logd} is proved in Section~\ref{binomial: above covering}. Our results give whp lower and upper bounds on certain hitting times, and their proofs are split accordingly into two parts, one for each direction.

Before we proceed to the proofs, let us remark that our results imply that the clique number $\omega(G_{V,d,p})$ undergoes a transition around the covering threshold. 
\begin{corollary}
Let $p\in (0,1)$. Let $V=V(d)$ be a sequence of vertex-sets and $x(d)= \frac{1}{d} \log \vert V(d)\vert$. The following hold:
\begin{itemize} 
\item if there is $s \in \mathbb{N}$ and $\varepsilon>0$  such that $t_{K_s}+\varepsilon<x <t_{K_{s+1}}$,
then whp $\omega(G_{V,d,p})=s$;
\item if there is $s \in \mathbb{N}$ such that  
 $x=t_{K_s}+o(1)$,
 then whp $\omega(G_{V,d,p})\in \{s, s-1\}$;
\item if there is $\gamma>0$ such that  
 $x=x(d)=t_{\textrm{cover}}+ \gamma \frac{\log d}{d}+o\left(\frac{\log d}{d}\right)$,
 then whp $\omega(G_{V,d,p})$ has order $d^{\gamma+o(1)}$;
\item if there is $c>0$ such that  
 $x=x(d)=t_{\textrm{cover}}+c+o(1)$, 
 then whp $\omega(G_{V,d,p})$ has order $e^{cd+o(d)}$.
\end{itemize}
\end{corollary}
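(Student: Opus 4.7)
Plan:

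The four bullets all follow from Theorems~\ref{binomial s<<d/log d} and~\ref{binomial s>>d/logd} by choosing $s = s(d)$ appropriately in each case and asking whether $x = x(d)$ sits above or below the appearance threshold for $K_s$. Two preliminary facts drive the argument: (i) $s \mapsto t_{K_s}(p)$ is strictly increasing in $s$, which can be checked by a short calculus argument on the explicit formula; and (ii) it admits the asymptotic expansion
\[
t_{K_s}(p) \;=\; t_{\textrm{cover}}(p) - \frac{\log 2}{s} + O\!\left(\left(\tfrac{2p}{1+p}\right)^{s}\right) \qquad (s \to \infty),
\]
obtained by factoring $((1+p)/2)^s$ out of the logarithm in the definition of $t_{K_s}(p)$; this incidentally re-proves Remark~\ref{covering as limit of clique thresholds}.

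For bullets 1 and 2 the parameter $s$ is a fixed integer, so the correction terms $\log s/d$ and $\varepsilon' \log d/d$ in Theorem~\ref{binomial s<<d/log d} are $o(1)$. Since $t_{K_{s+1}} - t_{K_s}$ is a positive constant depending only on $s$ and $p$, the hypothesis $t_{K_s} + \varepsilon < x < t_{K_{s+1}}$ (interpreted so that $x$ is bounded away from $t_{K_{s+1}}$ as well) places $x$ above the appearance threshold for $K_s$ and below the appearance threshold for $K_{s+1}$ once $d$ is large enough, yielding $\omega = s$ whp. For bullet 2, the weaker hypothesis $x = t_{K_s} + o(1)$ still lies strictly between $t_{K_{s-1}}$ and $t_{K_{s+1}}$ by positive constants, and the same reasoning delivers $\omega \in \{s-1, s\}$ whp.

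For bullet 4, $x = t_{\textrm{cover}} + c + o(1)$ with $c > 0$, and I would invoke Theorem~\ref{binomial s>>d/logd}. Taking $s^\pm = \lceil e^{(c \pm \varepsilon)d} \rceil$ for small fixed $\varepsilon > 0$, both of which comfortably satisfy $s \gg d/\log d$, gives $\log s^\pm / d = c \pm \varepsilon + o(1)$; thus $x$ lies below the appearance threshold for $K_{s^+}$ and above the appearance threshold for $K_{s^-}$. Sending $\varepsilon \to 0$ along an arbitrary sequence pins $\omega(G_{V,d,p}) = e^{cd + o(d)}$ whp.

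Bullet 3 is the most delicate. By fact (ii), whenever $s \to \infty$ one has $t_{K_s} - t_{\textrm{cover}} = o(\log d/d)$ (in fact much smaller for any $s$ polynomial in $d$), so the relevant balance is between $x - t_{\textrm{cover}} = \gamma \log d/d$ and the $\log s/d$ correction in the appearance threshold. The upper bound $\omega < d^{\gamma + \delta}$ whp is obtained by taking $s^+ = \lceil d^{\gamma + \delta}\rceil$ and checking that $x$ lies below the non-appearance condition of Theorem~\ref{binomial s<<d/log d} for any $\delta > 0$. For the matching lower bound, Theorem~\ref{binomial s>>d/logd} handles $\gamma \geq 1$ directly via $s^- = \lceil d^{\gamma - \delta}\rceil$; for $0 < \gamma < 1$ one must appeal to Theorem~\ref{binomial s<<d/log d}, whose appearance condition $x \geq t_{K_s} + 2\log s/d + \varepsilon\log d/d$ contains a factor-$2$ gap relative to the non-appearance condition, and this is the main obstacle I anticipate. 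Closing the gap likely requires opening up the proof of Theorem~\ref{binomial s<<d/log d} to tighten the constant in the second-moment/union-bound computation over $s$-cliques in this intermediate range; once done, sending $\delta \to 0$ pinches $\omega(G_{V,d,p})$ to $d^{\gamma + o(1)}$ whp.
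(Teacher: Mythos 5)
Your treatment of the first, second and fourth bullets is correct and is essentially the intended derivation: the paper gives no separate proof of this corollary, and for fixed $s$ the $O(\log d/d)$ windows in Theorem~\ref{binomial s<<d/log d} collapse below the constant gaps $t_{K_{s+1}}-t_{K_s}>0$, while for the fourth bullet Theorem~\ref{binomial s>>d/logd} applied to $s^{\pm}=\lceil e^{(c\pm\varepsilon)d}\rceil$ does exactly what you describe. (Your caveat that in the first bullet $x$ should also be bounded away from $t_{K_{s+1}}$ is a fair reading of the statement.)

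The third bullet is where the proposal breaks, and the break is not where you locate it. Your expansion $t_{K_s}=t_{\textrm{cover}}-\frac{\log 2}{s}+O(y^{s})$ is right, but the ensuing claim that $t_{K_s}-t_{\textrm{cover}}=o(\log d/d)$ for ``any $s$ polynomial in $d$'' is false: for $s=d^{\beta}$ with $\beta<1$ one has $t_{\textrm{cover}}-t_{K_s}=\frac{\log 2}{d^{\beta}}(1+o(1))\gg\frac{\log d}{d}$. Consequently your upper-bound step fails for $0<\gamma<1$: with $s^{+}=\lceil d^{\gamma+\delta}\rceil$ and $\gamma+\delta<1$, the non-appearance condition of Theorem~\ref{binomial s<<d/log d} reads $x\leq t_{\textrm{cover}}-\frac{\log 2}{d^{\gamma+\delta}}(1+o(1))+O\left(\frac{\log d}{d}\right)<t_{\textrm{cover}}$, which is violated by $x=t_{\textrm{cover}}+\gamma\frac{\log d}{d}+o(\log d/d)$. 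Worse, the appearance half of that same theorem, applied with $s=\lfloor d/(\log d)^{2}\rfloor=o(d/\log d)$, already gives whp $\omega\geq d/(\log d)^{2}=d^{1-o(1)}$ for this $x$ (indeed for any $x\geq t_{\textrm{cover}}-\Theta((\log d)^{2}/d)$), since its hypothesis $x\geq t_{K_s}+\frac{2\log s}{d}+\varepsilon\frac{\log d}{d}$ is comfortably satisfied thanks to the $-\log 2\,(\log d)^{2}/d$ term in $t_{K_s}$. So for $0<\gamma<1$ the clique number is $d^{1+o(1)}$ rather than $d^{\gamma+o(1)}$ --- by the Helly property $\omega$ equals the maximum coverage multiplicity over the $2^{d}$ points of $Q_d$, a maximum of roughly-Poisson$(d^{\gamma})$ variables that sits at scale $\frac{\log 2}{1-\gamma}\cdot\frac{d}{\log d}$ --- and the third bullet is only derivable (and, it appears, only true) for $\gamma\geq 1$. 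This is consistent with the uniform-model analogue in Section 3 being stated only for $\gamma>1$, with $0<\gamma\leq 1$ explicitly left open there. Your worry about the ``factor-$2$ gap'' in the appearance condition is a red herring: for $\gamma<1$ the lower bound $\omega\geq d^{\gamma-\delta}$ is the easy direction, and it is the upper bound that cannot be rescued.
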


\subsection{Below the covering threshold}\label{binomial: below covering}
\begin{proof}[Proof of Theorem~\ref{binomial s<<d/log d}]
Without loss of generality we may assume $V=[n]$. Set $x=\frac{1}{d}\log n$, and let $\varepsilon>0$ be fixed. Let $s=s(d)$ be a sequence of non-negative integers with $s(d)=o\left(d/\log d\right)$.

Let $q(s,d)$ denote the probability that a given $s$-set of vertices induces an $s$-clique in $G_{[n],d, p}$. By Remark~\ref{binomial d as intersection of binomial 1}, we have that
\begin{align*}
q(s,d)&= q(s,1)^d= \left(2\left(\frac{1+p}{2}\right)^s- p^s\right)^d= \exp \left(-s d t_{K_s}(p)\right).
\end{align*}
Let $X=X(d)$ be the random variable denoting the number of copies of $K_s$ in $G_{[n], d, p}$.

\noindent \textbf{Lower bound:} suppose $x\leq t_{K_s}(p)+ \frac{\log s}{d} - \varepsilon\frac{\log d}{d}$. Then
\begin{align*}
\mathbb{E}X= \binom{n}{s}q(s,d)&=\exp\left(sd x - s\log s - sd t_{K_s}(p)+O(s)\right)\\
&\leq\exp\left(-\varepsilon s \log d +O(s)\right)=o(1).
\end{align*}
It follows by Markov's inequality that whp $X=0$ and $G_{[n],d,p}$ contains no $s$-clique, proving the first part of the theorem.

 \noindent \textbf{Upper bound:} suppose $x \geq t_{K_s}(p)+ \frac{2\log s}{d} + \varepsilon\frac{\log d}{d}$. We have
\begin{align*}
\mathbb{E}X= \binom{n}{s}q(s,d)&\geq \left(\frac{n}{s}\right)^sq(s,d)\geq\exp\left(\varepsilon s \log d \right)\gg 1.
\end{align*}
We use Chebyshev's inequality to show that $X$ is concentrated about this value (and hence that whp $G_{[n],d,p}$ contains an $s$-clique).

Fix $i:\ 0 \leq i \leq s$. Let $A,B$ be two $s$-sets of vertices meeting in exactly $i$ vertices. Using Remark~\ref{binomial d as intersection of binomial 1} and the inclusion-exclusion principle, we compute the probability $b_i$ that both $A$ and $B$ induce a copy of $K_s$ in $G_{[n],d,p}$:
\begin{align*}
b_i&=\left(2\left(\frac{1+p}{2}\right)^{2s-i}+ 2 \left(\frac{1+p}{2}\right)^{2s-2i}p^i - 4\left(\frac{1+p}{2}\right)^{s-i}p^s + p^{2s-i}\right)^d\\
&=\left(\frac{1+p}{2}\right)^{(2s-i)d}\left(2+2 \left(\frac{2p}{1+p}\right)^i -4 \left(\frac{2p}{1+p}\right)^s+ \left(\frac{2p}{1+p}\right)^{2s-i} \right)^d.
\end{align*}
(Note $b_0=q(s,d)^2$ and $b_s=q(s,d)$.) Now, 
\begin{align*}
\mathbb{E}X^2&= \binom{n}{s} \sum_{i=0}^s \binom{n-s}{s-i}\binom{s}{i} b_i.
\end{align*}
We claim that the dominating contribution to this sum comes from the $i=0$ term. Indeed, for $s(d)=o\left(\frac{d}{\log d}\right)$, $d$ large and $x\geq t_{K_s}+ \frac{2\log s}{d} + \varepsilon\frac{\log d}{d}$,
\begin{align*}
 \frac{\binom{n-s}{s-i}\binom{s}{i}}{\binom{n-s}{s}}\frac{b_i}{b_0}
&\leq  \frac{2s^{2i}}{n^{i}}\frac{b_i}{b_0} \qquad \qquad \textrm{(provided $d$ is sufficiently large)} \notag \\
&\ =2s^{2i} \exp \Bigl(d\Bigl[-ix -(2s-i)\log \frac{2}{1+p}\notag\\
& \qquad + \log\left(2+2\left(\frac{2p}{1+p}\right)^i-4\left(\frac{2p}{1+p}\right)^s+\left(\frac{2p}{1+p}\right)^{2s-i} \right) \notag \\
 &\qquad \quad +2s\log \frac{2}{1+p} - \log \left(\left(2-\left(\frac{2p}{1+p}\right)^s\right)^2\right)\Bigr]\Bigr)\notag\\
 \end{align*}
(the second and third term in the exponent coming from $\log b_i$ and the last two terms coming from $\log b_0=\log \left(q(s,d)^2\right)$)
\begin{align}
&\leq \frac{2}{d^{i \varepsilon}}\exp\Bigl(d\Bigl[\log\left(2+2\left(\frac{2p}{1+p}\right)^i-4\left(\frac{2p}{1+p}\right)^s+\left(\frac{2p}{1+p}\right)^{2s-i}\right)\notag\\
&\qquad \qquad-\left(2- \frac{i}{s}\right)\log\left(2-\left(\frac{2p}{1+p}\right)^s\right)\Bigr]\Bigr), \label{big scary expression}
\end{align}
with the inequality in the last line coming from substituting $t_{K_s}(p)+ \frac{2\log s}{d} +\varepsilon \frac{\log d}{d}$ for $x$, and rearranging terms. We now resort to the following technical lemma.
\begin{lemma}\label{technical lemma}
For all $y \in [0,1]$ and all integers $0\leq i \leq s$, the following inequality holds:
\begin{align*}\left(2+2y^i-4y^s+y^{2s-i}\right)^s \leq \left(2-y^s\right)^{2s-i}. \end{align*}
\end{lemma}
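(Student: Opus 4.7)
My plan is to substitute $z = y^s \in [0,1]$ and $\theta = i/s \in [0,1]$, so that $y^i = z^\theta$ and $y^{2s-i} = z^{2-\theta}$. Taking the $s$-th root, the claim will reduce to showing
\[
f(\theta) := 2 + 2z^\theta - 4z + z^{2-\theta} \;\le\; (2-z)^{2-\theta} =: g(\theta)
\]
for all $z, \theta \in [0,1]$. A direct computation gives $f(0) = (2-z)^2 = g(0)$ and $f(1) = 2-z = g(1)$, so the two functions agree at both endpoints of $\theta$.

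The key step is to prove that $\log f(\theta)$ is \emph{convex} in $\theta$. Since $\log g(\theta) = (2-\theta)\log(2-z)$ is linear in $\theta$ and coincides with $\log f$ at the two endpoints, log-convexity of $f$ would immediately imply $\log f \le \log g$, and hence $f \le g$, throughout $[0,1]$. To verify the convexity of $\log f$, I would compute $f\cdot f'' - (f')^2$. Using $f'(\theta) = (\log z)(2z^\theta - z^{2-\theta})$ and $f''(\theta) = (\log z)^2(2z^\theta + z^{2-\theta})$, the expansion should simplify to
\[
2(\log z)^2\bigl[(1-2z)(2z^\theta + z^{2-\theta}) + 4z^2\bigr],
\]
which is manifestly non-negative when $z \le 1/2$.

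For $z > 1/2$ the bracket contains terms of opposite sign, so I would argue separately: the function $2z^\theta + z^{2-\theta}$ is monotonically decreasing in $\theta$ on $[0,1]$ (its derivative is $(\log z)(2z^\theta - z^{2-\theta})$, the product of a negative and a positive factor), and so attains its maximum $2 + z^2$ at $\theta = 0$. This reduces the task to verifying a one-variable polynomial inequality on $[1/2,1]$, namely $4z^2 \ge (2z-1)(2+z^2)$, an elementary calculus exercise. The endpoint cases $z = 0$ and $z = 1$, where the log-convexity argument formally degenerates, will be handled by direct substitution. The main anticipated obstacle is the expansion of $ff'' - (f')^2$: exhibiting it as a sum of manifestly controllable quantities must be done carefully, and is the only delicate arithmetic step of the proof.
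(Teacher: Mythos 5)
Your proposal is correct, and it takes a genuinely different route from the paper's. The paper fixes $i$ and $s$ and studies the ratio $g(y)=(2-y^s)^{2s-i}/(2+2y^i-4y^s+y^{2s-i})^s$ as a function of $y$, showing $g(0)=2^{s-i}\geq 1$, $g(1)=1$ and $g'\leq 0$ on $[0,1]$; the derivative computation there reduces to the positivity of $h_2(y)=4i-(4s-2i)y^{s-i}+(4s-4i)y^s$, checked via its unique interior critical point. You instead take $s$-th roots, pass to the continuous exponent $\theta=i/s$ and the variable $z=y^s$, and exploit that the left side is log-convex in $\theta$ while the right side is log-affine, with equality at $\theta=0,1$. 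I have verified that $ff''-(f')^2$ does simplify to $2(\log z)^2\bigl[(1-2z)(2z^\theta+z^{2-\theta})+4z^2\bigr]$ (using $z^\theta\cdot z^{2-\theta}=z^2$), and that the residual polynomial inequality $4z^2\geq(2z-1)(2+z^2)$ holds on $[1/2,1]$: the cubic $2z^3-5z^2+4z-2$ attains its maximum there at $z=2/3$, where it equals $-26/27<0$. Your argument buys a slightly stronger statement --- the inequality for all real $i\in[0,s]$ --- and a cleaner conceptual picture of the lemma as a convexity interpolation between the two trivial endpoint identities $i=0$ and $i=s$; the paper's proof is a more pedestrian single-variable monotonicity argument of comparable computational weight. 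Two small points to make explicit in a final write-up: positivity of $f$ (needed for $\log f$ to be defined) follows from the decomposition $f=2(1-z)+2z^\theta(1-z^{1-\theta})+z^{2-\theta}$, and the degenerate case $s=0$ (which forces $i=0$) should be dispatched separately, since $\theta=i/s$ is then undefined.
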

We defer the proof of Lemma~\ref{technical lemma} (which is a simple albeit lengthy exercise) to Appendix A. Set $y = \left(\frac{2p}{1+p}\right)$. As $0 < p < 1$, we have $y \in (0,1)$. Applying Lemma~\ref{technical lemma}, we have
\[\log \left( 2+2y^i-4y^s+y^{2s-i}\right) - \left(2-\frac{i}{s}\right) \log \left(2-y^s\right)<0.\]
Substituting this into the expression inside the exponential in (\ref{big scary expression}), we get
\[\frac{\binom{n-s}{s-i}\binom{s}{i}}{\binom{n-s}{s}}\frac{b_i}{b_0}\leq \frac{2}{d^{i \varepsilon}}.\]
Thus
\begin{align*}
\mathbb{E}\left(X^2\right)&\leq \binom{n}{s}\binom{n-s}{s}b_0 \left(1 + \frac{2}{d^{\varepsilon}}\left(1+ \frac{1}{d^{\varepsilon}}+\frac{1}{d^{2\varepsilon}}+\cdots \right)\right)\\
&=\left(\mathbb{E}X\right)^2(1+o(1)).
\end{align*}
In particular, $\textrm{Var}(X)=o\left(\left(\mathbb{E}X\right)^2\right)$, and by Chebyshev's inequality whp $X$ is at least $\frac{1}{2}\mathbb{E}X>0$. Thus whp $G_{[n],d,p}$ contains (many) $s$-cliques.
\end{proof}

\begin{remark}
We have shown that the transition between whp no $s$-cliques and whp many $s$-cliques in $G_{[n],d,p}$ takes place inside a window of width (with respect to $x$) of order $O\left(\frac{\log d}{d}\right)$. In the case when $s$ is bounded, $s(d)=O(1)$, it is easy to run through the proof of Theorem~\ref{binomial s<<d/log d} again and show that in fact we may replace $\varepsilon \log d$ in the statement of the Theorem by any function $g=g(d)$ tending to infinity with $d$, so that the width of the window may be reduced to $O\left(\frac{g}{d}\right)$.
\end{remark}

Having proven Theorem~\ref{binomial s<<d/log d}, we now turn our attention to the following question. Let $s \in \mathbb{N}$ be fixed. What is the dimension of the features subcubes in the first $s$-clique to appear in $G_{[n],d, p}$?

As always, write $x$ for $\frac{1}{d}\log n$. Let $S_{\alpha}$ be a subcube of dimension $\alpha d$. Suppose $S_{\alpha}$ is the feature subcube of some $v \in [n]$, $f(v)=S_{\alpha}$.  Then the expected number of $s$-cliques involving $v$ is
\begin{align*}
\mathbb{E}&\#\{K_{s-1} \textrm{ meeting } S_{\alpha}\}=\binom{n-1}{s-1} \left(\frac{1+p}{2}\right)^{(s-1)(1-\alpha)d}\left(2\left(\frac{1+p}{2}\right)^{s-1}-p^{s-1}\right)^{\alpha d}\\
%
\end{align*}
An application of Wald's equation yields that the expected number $E_{\alpha}^s$ of pairs $(v, \mathcal{S})$ for which
 (i)  $v\in [n]$ is a vertex with a feature subcube $f(v)$ of dimension $\alpha d$, and
(ii) $\mathcal{S}$ is an $s$-set of vertices from $[n]$ containing $v$ and inducing an $s$-clique in $G_{[n], d, p}$,
 is:
\begin{align*}
E_{\alpha}^s&=\mathbb{E}\#\{\textrm{$\alpha d$-dimensional feature subcubes}\} \times \mathbb{E}\#\{K_{s-1} \textrm{ meeting } S_{\alpha}\}\\
&=n \binom{d}{\alpha d}p^{\alpha d}(1-p)^{(1-\alpha)d}\binom{n-1}{s-1} \left(\frac{1+p}{2}\right)^{(s-1)(1-\alpha)d}\left(2\left(\frac{1+p}{2}\right)^{s-1}-p^{s-1}\right)^{\alpha d}\\
&=\exp\Bigl(d \Bigl[ sx +\alpha \log \left(\left(\frac{p}{\alpha}\right)\left(2\left(\frac{1+p}{2}\right)^{s-1}-p^{s-1}\right)\right)\\
& \qquad + (1-\alpha)\log\left( \left(\frac{1-p}{1-\alpha}\right)\left(\frac{1+p}{2}\right)^{s-1}\right) \Bigr]+o(d)\Bigr).
\end{align*}
Define
\begin{align*}t^{\alpha}_{K_s}:&=-\frac{1}{s}\Bigl(\alpha \log \left(\frac{p}{\alpha}\cdot\left(2\left(\frac{1+p}{2}\right)^{s-1}-p^{s-1}\right)\right)\\
& \qquad + (1-\alpha)\log\left( \left(\frac{1-p}{1-\alpha}\right)\cdot\left(\frac{1+p}{2}\right)^{s-1}\right)\Bigr).
\end{align*}
The expression above can then be rewritten as $E_{\alpha}^s= e^{sd\left(x -t^{\alpha}_{K_s}+o(1)\right)}$. Set 
\[\alpha_s=\alpha_s(p):= p\left(\frac{2\left(\frac{1+p}{2}\right)^{s-1}-p^{s-1}}{2\left(\frac{1+p}{2}\right)^{s}-p^{s}}\right).\] 
\begin{remark}
The quantity $\alpha_s$ is exactly the probability that a given vertex receives colour $\star$ in $G_{[n],1,p}$ conditional on it forming an $s$-clique with a fixed $(s-1)$-set of vertices. In particular it follows from Remark~\ref{binomial d as intersection of binomial 1} that $\alpha_s d$ is the expected dimension of feature subcubes in an $s$-clique in $G_{[n],d,p}$.
\end{remark}
\begin{remark}\label{alphas monotonicity}
For $0<p<1$ fixed, the sequence $\left(\alpha_s\right)_{s \in \mathbb{N}}$ is strictly increasing and tends to $\frac{2p}{1+p}$ as $s\rightarrow \infty$. Note in particular that for all $s>1$, $\alpha_s(p)>\alpha_1(p)=p$.
\end{remark}

\begin{proposition}\label{alpha-s unique minimum of t-alpha-Ks}
Let $p\in(0,1)$ be fixed. Then for every $s \in \mathbb{N}$ the following equality holds:
\[t_{K_s}=t^{\alpha_s}_{K_s}.\]
Moreover, as a function of $\alpha$, $t^{\alpha}_{K_s}$ is strictly decreasing for $\alpha \in [0, \alpha_s)$ and strictly increasing for $\alpha \in (\alpha_s,1]$. In particular, $\alpha_s$ is the unique minimum of $t^{\alpha}_{K_s}$ over all $\alpha \in [0,1]$.
\end{proposition}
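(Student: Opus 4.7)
The plan is to treat $t^{\alpha}_{K_s}$, viewed as a function of $\alpha \in [0,1]$, by straightforward calculus, identifying the minimum via the first-order condition and getting convexity from the second derivative. To this end I would first clean up the notation: set
\[A := 2\left(\tfrac{1+p}{2}\right)^{s-1} - p^{s-1}, \qquad C := \left(\tfrac{1+p}{2}\right)^{s-1},\]
so that $\alpha_s = pA/B$ where $B := 2\left(\tfrac{1+p}{2}\right)^s - p^s$, and
\[-s\, t^{\alpha}_{K_s} = \alpha \log(pA) + (1-\alpha)\log((1-p)C) - \alpha\log\alpha - (1-\alpha)\log(1-\alpha).\]

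Next, I would compute the key algebraic identity that makes everything click, namely
\[pA + (1-p)C = 2p\left(\tfrac{1+p}{2}\right)^{s-1} - p^s + (1-p)\left(\tfrac{1+p}{2}\right)^{s-1} = (1+p)\left(\tfrac{1+p}{2}\right)^{s-1} - p^s = B.\]
Armed with this, differentiating gives
\[\frac{d}{d\alpha}\left(-s\, t^{\alpha}_{K_s}\right) = \log \frac{pA(1-\alpha)}{(1-p)C\,\alpha},\]
so the unique critical point on $(0,1)$ satisfies $\alpha(pA + (1-p)C) = pA$, i.e. $\alpha = pA/B = \alpha_s$. The second derivative equals $-\frac{1}{\alpha} - \frac{1}{1-\alpha} < 0$, so $-s\,t^{\alpha}_{K_s}$ is strictly concave on $(0,1)$, yielding strict convexity of $t^{\alpha}_{K_s}$ and the claimed monotonicity: strictly decreasing on $[0,\alpha_s)$ and strictly increasing on $(\alpha_s,1]$.

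Finally, to verify $t_{K_s} = t^{\alpha_s}_{K_s}$, I would substitute $\alpha = \alpha_s$ into the expression for $-s\,t^{\alpha}_{K_s}$. From $\alpha_s = pA/B$ one reads off $pA/\alpha_s = B$, and using $pA + (1-p)C = B$ one gets $1 - \alpha_s = (1-p)C/B$, hence $(1-p)C/(1-\alpha_s) = B$ as well. The two logarithmic terms therefore collapse to $\alpha_s \log B + (1-\alpha_s)\log B = \log B$, giving
\[t^{\alpha_s}_{K_s} = -\tfrac{1}{s}\log B = t_{K_s},\]
as required.

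The main obstacle is minor and essentially bookkeeping: spotting the identity $pA + (1-p)C = B$, which is what forces the first-order condition to collapse exactly to $\alpha = \alpha_s$ and makes the substitution at $\alpha_s$ telescope to $\log B$. Once this identity is in hand, the rest is a routine critical-point analysis of a strictly convex function on $[0,1]$ (finite at the endpoints by the usual convention $0\log 0 = 0$).
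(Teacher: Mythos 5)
Your proof is correct and follows essentially the same route as the paper: both rest on the identity $pA+(1-p)C=B$ (the paper's $q(s,1)-pq(s-1,1)=(1-p)\left(\frac{1+p}{2}\right)^{s-1}$), substitute $\alpha=\alpha_s$ to collapse the two logarithms to $\log B$, and then analyse the first derivative of $s\,t^{\alpha}_{K_s}$, whose critical-point equation forces $\alpha=\alpha_s$. The only cosmetic difference is that you invoke the second derivative for strict convexity, whereas the paper reads off the sign of the first derivative directly on either side of $\alpha_s$.
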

\begin{proof}
The first part of our proposition is a simple calculation. Recall from the proof of Theorem~\ref{binomial s<<d/log d} that $q(s,1)= 2\left(\frac{1+p}{2}\right)^s-p^s$ is the probability that a given $s$-set of vertices forms an $s$-clique in $G_{[n],d, 1}$. Note that 
\begin{align*}
&t_{K_s}=-\frac{1}{s}\log q(s,1), \qquad \alpha_s = \frac{p q(s-1,1)}{q(s,1)}\  \textrm{ and}\\
&q(s,1)-pq(s-1,1)=(1-p)\left(\frac{1+p}{2}\right)^{s-1}.
\end{align*}
Thus,
\begin{align*}
t^{\alpha_s}_{K_s}&=-\frac{1}{s}\Bigl(\alpha_s \log \left(\left(\frac{p}{\alpha_s}\right)\left(2\left(\frac{1+p}{2}\right)^{s-1}-p^{s-1}\right)\right)+ (1-\alpha_s)\log\left(\left(\frac{1-p}{1-\alpha_s}\right)\left(\frac{1+p}{2}\right)^{s-1}\right)\Bigr)\\
&=-\frac{1}{s}\left(\alpha_s \log \left(\frac{q(s,1)}{q(s-1,1)}q(s-1,1)\right)+ (1-\alpha_s)\log\left( \frac{(1-p)q(s,1)}{q(s,1)-pq(s-1,1)}\left(\frac{1+p}{2}\right)^{s-1}\right)\right)\\
&=-\frac{1}{s}\left(\alpha_s \log q(s,1)+ (1-\alpha_s)\log q(s,1)\right)=-\frac{1}{s}\log q(s,1)=t_{K_s},
\end{align*}
as required.

Now, let us show that $t^{\alpha_s}_{K_s}$ is in fact the unique minimum of $t^{\alpha}_{K_s}$ over $\alpha \in [0,1]$. 
Making use of our observations above, we may write $st^{\alpha}_{K_s}$ as
\[s t^{\alpha}_{K_s}= \alpha \log \left(\frac{\alpha}{q(s,1)\alpha_s}\right) +(1-\alpha)\log \left(\frac{1-\alpha}{q(s,1)(1-\alpha_s)}\right).\]
The derivative with respect to $\alpha$ is
\[s \frac{d}{d\alpha}\left(t^{\alpha}_{K_s}\right)=\log \left(\frac{\alpha}{q(s,1)\alpha_s}\right) - \log\left(\frac{1-\alpha}{q(s,1)(1-\alpha_s)} \right),\]
which is strictly negative for $0\leq \alpha<\alpha_s$, zero for $\alpha=\alpha_s$ and strictly positive for $1\geq \alpha>\alpha_s$, establishing our claim.
\end{proof}
Used in conjunction with Theorem~\ref{binomial s<<d/log d} (or more precisely Corollary~\ref{binomial s constant clique threshold}), Proposition~\ref{alpha-s unique minimum of t-alpha-Ks} enables us to identify with quite some precision the dimension of the feature subcubes of the vertices which witness the emergence of $s$-cliques in $G_{[n],d,p}$. Formally we return to our dynamic view of the model, and we consider the graph $B_n=G_{[n],d, p}$ at the hitting time $n= N^b_{s}$ for the property of containing a clique on $s$ vertices. By definition of the hitting time, $G_{[n],d,p}$ contains at least one $K_s$-subgraph. Set $W_{s}= W_{s}(d,p)$ to be the set of all vertices in $[n]$ which are contained in such a $K_s$-subgraph.
\begin{proposition}\label{prop: dimension of subcubes in s-cliques}
Whp, all feature subcubes of vertices contained in $W_{s}(d,p)$ have dimension $\left(\alpha_s +o(1)\right)d$.
\end{proposition}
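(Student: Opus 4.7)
The plan is a direct first-moment computation built on top of the calculations already carried out in the excerpt, exploiting the fact, established in Proposition~\ref{alpha-s unique minimum of t-alpha-Ks}, that $\alpha_s$ is the unique minimizer of $t^\alpha_{K_s}$ on $[0,1]$. Concretely, I will fix an arbitrary $\delta>0$ and show that whp every $s$-clique present at the hitting time $N^b_s$ uses only vertices whose feature subcube has dimension within $\delta d$ of $\alpha_s d$; as $\delta$ is arbitrary, the claim $(\alpha_s+o(1))d$ will follow.

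Step 1: By Proposition~\ref{alpha-s unique minimum of t-alpha-Ks} the function $\alpha\mapsto t^\alpha_{K_s}$ is continuous on the compact interval $[0,1]$ (with the usual convention $0\log 0=0$), strictly decreasing on $[0,\alpha_s]$ and strictly increasing on $[\alpha_s,1]$. Hence there exists $\eta=\eta(\delta,s,p)>0$ with $t^\alpha_{K_s}\geq t_{K_s}+\eta$ for every $\alpha\in[0,1]$ with $|\alpha-\alpha_s|\geq\delta$. Step 2: Set $n_0:=\lceil e^{(t_{K_s}+\eta/3)d}\rceil$. By Corollary~\ref{binomial s constant clique threshold}, $G_{[n_0],d,p}$ whp contains an $s$-clique, so $N^b_s\leq n_0$ whp. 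Step 3: Working in $G_{[n_0],d,p}$, let $Y$ count pairs $(v,\mathcal{S})$ with $v\in[n_0]$, $|\dim f(v)/d-\alpha_s|\geq\delta$, $v\in\mathcal{S}$, $|\mathcal{S}|=s$, and $\mathcal{S}$ inducing a $K_s$. Using the expression $E^s_\alpha=e^{sd(x-t^\alpha_{K_s})+o(d)}$ derived before Proposition~\ref{alpha-s unique minimum of t-alpha-Ks} and summing over the at most $d+1$ values of $\alpha\in\{0,1/d,\ldots,1\}$ in the bad region, with $x=\log(n_0)/d=t_{K_s}+\eta/3+o(1)$ and Step~1, I obtain
\[\mathbb{E}Y\leq (d+1)\exp\!\bigl(sd(t_{K_s}+\tfrac{\eta}{3})-sd(t_{K_s}+\eta)+o(d)\bigr)=\exp\!\bigl(-\tfrac{2sd\eta}{3}+o(d)\bigr)=o(1).\]
Step 4: By Markov's inequality whp $Y=0$ in $G_{[n_0],d,p}$; together with Step~2 and the monotonicity of the sequence $(B_n)$ (any $K_s$ present in $B_{N^b_s}$ is present in $B_{n_0}$ whenever $N^b_s\leq n_0$), this yields that whp no $v\in W_s$ has $|\dim f(v)/d-\alpha_s|\geq\delta$, completing the argument.

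The only mild technical point is to check that the $o(1)$ in the exponent of $E^s_\alpha$ may be taken uniformly in $\alpha$; this is straightforward, since $\binom{d}{\alpha d}\leq e^{H(\alpha)d}$ (with $H$ the binary entropy function, continuous on $[0,1]$) holds with equality up to a polynomial factor for all $\alpha\in\{0,1/d,\ldots,1\}$, and the remaining factors in $E^s_\alpha$ are continuous in $\alpha$ down to the endpoints. I do not anticipate any genuine obstacle: the entire proof is essentially unpacking the observation that $E^s_\alpha$ is exponentially smaller (in $d$) at any $\alpha$ bounded away from $\alpha_s$ than near the threshold value at $\alpha_s$.
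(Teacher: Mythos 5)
Your proof is correct and follows essentially the same route as the paper's: a first-moment bound on the number of bad (vertex, clique) pairs via $E^s_\alpha$, using the strict uniqueness of the minimizer $\alpha_s$ from Proposition~\ref{alpha-s unique minimum of t-alpha-Ks}, the whp location of the hitting time from Corollary~\ref{binomial s constant clique threshold}, and Markov's inequality together with monotonicity. Your explicit remark on the uniformity of the $o(d)$ term in the exponent of $E^s_\alpha$ is a point the paper leaves implicit, but there is no substantive difference in method.
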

\begin{proof}
Fix $\varepsilon>0$. By Proposition~\ref{alpha-s unique minimum of t-alpha-Ks}, there exists $\delta>0$ such that if $t^{\alpha}_{K_s}\leq t_{K_s} +\delta = t^{\alpha_s}_{K_s} +\delta$, then $\vert \alpha -\alpha_s \vert <\varepsilon$. 
By Corollary~\ref{binomial s constant clique threshold}, whp the hitting time $N^b_{s}$ for containing an $s$-clique satisfies $e^{t_{K_s}d - \frac{\delta}{2}d}  \leq  N^b_{s} \leq e^{t_{K_s}d + \frac{\delta}{2}d}$. We show that for $\vert V\vert =e^{xd}$ and $\vert x-t_{K_s} \vert <\frac{\delta}{2}$ whp no vertex in $G_{V,d,p}$ with a feature subcube of dimension $\alpha d$ with $\vert \alpha_s -\alpha \vert \geq \varepsilon$ is contained in a copy of $K_s$. Since $\varepsilon>0$ was arbitrary, this is enough to establish the proposition.

Set $I_{\varepsilon}=\left\{\frac{i}{d}: \ i \in \{0,1,\ldots d \}\right\} \setminus (\alpha_s-\varepsilon, \alpha_s+\varepsilon)$. 
The expected number of pairs $(v, S)$ where $v\in V$ has a feature subcube of dimension $\alpha d$ for some $\alpha \in I_{\varepsilon}$, $v \in S$ and $S\subseteq V$ induces a copy of $K_s$ in $G_{V,d,p}$ is:
\begin{align*}
\sum_{\alpha \in I_{\varepsilon}} E^s_{\alpha}& =\sum_{\alpha \in I_{\varepsilon}} e^{sd \left(x - t^{\alpha}_{K_s}\right)}\leq d e^{sd\left(x -t_{K_s}-\delta+o(1)\right)}\leq e^{-\frac{s\delta}{2}d+o(d) }=o(1). 
\end{align*}
Markov's inequality thus implies that whp no such pair $(v, S)$ exists in $G_{V,d,p}$. In particular all vertices of $G_{V,d,p}$ which are contained in a copy of $K_s$ must have dimension $\alpha d$ for some $\alpha: \ \vert \alpha -\alpha_s \vert <\varepsilon$, as claimed.
\end{proof}

\subsection{The covering threshold}\label{binomial: at covering}
We may view the question of covering the hypercube $Q_d$ with randomly selected subcubes as an instance of 
the following problem.
\begin{problem}[Generalised Coupon Collector Problem]\label{generalised coupon problem}
	Let $\Omega$ be a (large) finite set, and let $X$ be a random variable taking values in the subsets of $\Omega$. Suppose we are given a 
	sequence of independent random variables $X_1, X_2, \ldots, X_n$ with distribution given by $X$. When (for which values of $n$) do we have 
	$\bigcup_{i=1}^n X_i=\Omega$ holding whp?
\end{problem}
When $X$ is obtained by selecting a singleton from $\Omega$ uniformly at random, Problem~\ref{generalised coupon problem} is the classical coupon collector problem (see~\cite{Newman60} for an early incarnation of the problem).
\begin{proposition}\label{crude collector bounds}
Set $\vert \Omega \vert =m$. Suppose $X$ is such that for every $\omega, \omega' \in \Omega$, $\mathbb{P}(\omega \in X)=\mathbb{P}(\omega' \in X)$. Then, for every fixed $\varepsilon>0$,
\[\lim_{m\rightarrow \infty}\mathbb{P}\left(\bigcup_{i=1}^n X_i = \Omega\right)=\left\{\begin{array}{ll}
0 & \textrm{if }n \ll \frac{m}{\mathbb{E} \vert X \vert}\\
1 & \textrm{if }n \geq (1+\varepsilon)\frac{m \log m}{\mathbb{E} \vert X \vert}\end{array}\right.\]
\end{proposition}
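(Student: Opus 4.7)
The plan is to establish both directions via short first-moment arguments, exploiting the symmetry hypothesis on $X$. The key preliminary observation is that, by symmetry, the common value $p := \mathbb{P}(\omega \in X)$ is well defined, and since $\mathbb{E}|X| = \sum_{\omega \in \Omega} \mathbb{P}(\omega \in X) = mp$, we have $p = \mathbb{E}|X|/m$. Hence the hypotheses may be rewritten as $n \ll 1/p$ (resp.\ $n \geq (1+\varepsilon)\log(m)/p$) in the lower (resp.\ upper) bound.

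For the $0$-statement (whp not covered when $n \ll m/\mathbb{E}|X|$), I would use Markov's inequality applied to the \emph{total size} $\sum_{i=1}^n |X_i|$ of the sampled subsets. The elementary observation is that $\bigcup_{i=1}^n X_i = \Omega$ forces $\sum_{i=1}^n |X_i| \geq m$. Since $\mathbb{E}\sum_{i=1}^n|X_i| = n\mathbb{E}|X|$, Markov yields
\[\mathbb{P}\!\left(\bigcup_{i=1}^n X_i = \Omega\right) \leq \mathbb{P}\!\left(\sum_{i=1}^n |X_i| \geq m\right) \leq \frac{n\,\mathbb{E}|X|}{m} = o(1),\]
under the assumed growth of $n$.

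For the $1$-statement (whp covered when $n \geq (1+\varepsilon)m\log(m)/\mathbb{E}|X|$), I would apply a union bound over the individual elements $\omega \in \Omega$. By independence of the $X_i$ and the symmetry of $X$, the probability that a given $\omega$ is missed by all $n$ samples is exactly $(1-p)^n \leq e^{-np}$. Substituting the hypothesized value $n \geq (1+\varepsilon)\log(m)/p$ and summing over the $m$ elements of $\Omega$ gives
\[\mathbb{P}\!\left(\bigcup_{i=1}^n X_i \neq \Omega\right) \leq m(1-p)^n \leq m\cdot e^{-(1+\varepsilon)\log m} = m^{-\varepsilon} = o(1),\]
as required.

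There is no genuine obstacle here: the content of the proposition is precisely that the symmetry hypothesis is strong enough to reduce the problem to one-line first-moment computations on either side, with no need for a variance estimate or any finer concentration tool. The only mild care required is to verify that the total-size Markov bound and the per-element union bound match the stated thresholds up to the correct $\log m$ factor, which is immediate from $p = \mathbb{E}|X|/m$.
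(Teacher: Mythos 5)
Your proof is correct. The covering direction ($n \geq (1+\varepsilon)m\log m/\mathbb{E}|X|$) is essentially the paper's argument: by symmetry $\mathbb{P}(\omega\in X)=\mathbb{E}|X|/m$, and a union bound over the $m$ elements (equivalently, Markov applied to the number of uncovered elements, which is what the paper does) gives $m(1-\mathbb{E}|X|/m)^n=o(1)$. The non-covering direction is where you genuinely diverge. The paper fixes a single element $\omega$ and computes directly that $\mathbb{P}(\omega\notin\bigcup_i X_i)=(1-\mathbb{E}|X|/m)^n=e^{-o(1)}=1-o(1)$ when $n\ll m/\mathbb{E}|X|$, so that even one prescribed element is whp left uncovered. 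You instead apply Markov's inequality to the total volume $\sum_{i=1}^n|X_i|$, using the deterministic implication that covering forces $\sum_i|X_i|\geq m$. Both are one-line first-moment arguments and both are valid. Your version has the small advantage of not using the symmetry hypothesis at all in that direction (only $\mathbb{E}|X|$ enters), and it is the same volume heuristic the authors themselves deploy later for the clique thresholds above covering; the paper's version yields the slightly stronger qualitative conclusion that a \emph{fixed} element is uncovered with probability $1-o(1)$, hence that whp almost all of $\Omega$ remains uncovered, not merely some of it. Neither direction requires a variance estimate, as you note.
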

\begin{proof}
For any fixed $\omega \in \Omega$,
\begin{align*}
\mathbb{P}\left(\omega \notin \bigcup_{i=1}^n X_i\right)&=\left(1-\mathbb{P}(\omega \in X)\right)^n=\left( 1- \frac{\mathbb{E}\vert X \vert }{m}\right)^n.
\end{align*}
Thus if $n =o\left(\frac{m}{\mathbb{E} \vert X \vert }\right)$, the probability that $\omega \notin \bigcup_{i=1}^n X_i$ is $e^{-o(1)}=1-o(1)$, whence whp $\bigcup_{i=1}^n X_i \neq \Omega$. On the other hand, if $n\geq(1+\varepsilon)\frac{m \log m}{\mathbb{E} \vert X \vert}$, then
\begin{align*}
\mathbb{E} \left\vert \Omega \setminus \bigcup_{i=1}^m X_i \right\vert& =\sum_{\omega \in \Omega}\mathbb{P}\left(\omega \notin \bigcup_{i=1}^n X_i\right)=m \left( 1- \frac{\mathbb{E}\vert X \vert }{m}\right)^n=o(1)
\end{align*}
so that by Markov's inequality whp there are no uncovered elements and $\bigcup_{i=1}^n X_i=\Omega$.
\end{proof}
The bounds we give in Proposition~\ref{crude collector bounds} are very crude, but are essentially best possible (see~\cite{Aldous91, FalgasRavryLarssonMarkstrom15}). 
In our setting, we have $\Omega =Q_d$, and the $(X_i)_{i=1}^n$ are the feature subcubes of vertices in the binomial random subcube intersection graph $G_{[n],d, p}$. Note that the expected volume of a feature subcube $f(v)$ is:
\begin{align*}
\mathbb{E}\vert f(v)\vert &= \sum_{i=0}^d \mathbb{P}(f(v) \textrm{ has dimension }i)2^i=\sum_{i=0}^d \binom{d}{i}(1-p)^{d-i}(2p)^i=(1+p)^d,
\end{align*}
while on the other hand typical feature subcubes have dimension $pd+o(d)$ and thus volume $2^{pd+o(d)}$. Since $2^p<1+p$ for all $p \in (0,1)$, typical feature subcubes have a volume much smaller than the expected volume. In particular, the variance of the volume of a feature subcube is large, and our covering problem differs significantly from the classical coupon collector problem.

We need to make one more definition before proceeding to the proof of Theorem~\ref{binomial covering threshold}.
\begin{definition}
The \emph{Hamming distance} $\textrm{dist}(y,y')$ between two elements $y,y'$ of $Q_d$ is the number of coordinates in which they differ.
\end{definition}
\begin{proof}[Proof of Theorem~\ref{binomial covering threshold}]
Without loss of generality, we may assume that $V=[n]$, and that we are working with the binomial random subcube intersection graph $G_{[n],d, p}$. Let $\mathbf{0}$ denote the all zero element $(0,0, \ldots 0)$ from $Q_d$. The expected number of elements of $Q_d=\{0,1\}^d$ not covered by the union of the feature subcubes $\bigcup_{v=1}^n f(v)$ is
\begin{align*}
\mathbb{E}\vert Q_d\setminus \bigcup_{v=1}^n f(v) \vert &= \vert Q_d \vert \mathbb{P}\left(\mathbf{0} \notin \bigcup_{v=1}^n f(v) \right)=2^d \left(1- \left(\frac{1+p}{2}\right)^d\right)^n\\
&=\exp\left(d \log 2 - n\left(\frac{1+p}{2}\right)^d\left(1+O\left(\frac{1+p}{2}\right)^d\right)\right)\\
\end{align*}

Now let $\varepsilon$ be fixed with $0< \varepsilon < \log 2$.

\noindent \textbf{Upper bound:} suppose $n=e^{xd}\geq \left(\frac{2}{1+p}\right)^dd\left(\log 2 + \varepsilon\right)$.
Then the expected number of uncovered elements of $Q_d$ is at most $e^{-\varepsilon d +o(d)}=o(1)$, whence we deduce from Markov's inequality that whp $\bigcup_{v=1}^n f(v)= Q_d$.

\noindent \textbf{Lower bound:} suppose $n=e^{xd}= \lfloor \left(\frac{2}{1+p}\right)^dd\left(\log 2 - \varepsilon\right)\rfloor$.
Then the expected number of uncovered elements of $Q_d$ is $e^{\varepsilon d+o(d)}$, which is large. We use Chebyshev's inequality to show the actual number of uncovered elements is concentrated about this value.

For $1\leq i \leq d$, let $\mathbf{e}_{[i]}$ denote the element of $Q_d=\{0,1\}^d$ with its first $i$ coordinates equal to $1$ and its last $d-i$ coordinates equal to $0$. Clearly we have $\textrm{dist}(\mathbf{0}, \mathbf{e}_{[i]})=i$. The probability that neither of $\mathbf{0}, \mathbf{e}_{[i]}$ is covered is:
\begin{align*}
\mathbb{P}\left(\mathbf{0}, \mathbf{e}_{[i]} \notin \bigcup_{v=1}^nf(v)\right)&= \left(1- 2\left(\frac{1+p}{2}\right)^d + p^i\left(\frac{1+p}{2}\right)^{d-i}\right)^n\\
&= \exp\left(-n\left[2\left(\frac{1+p}{2}\right)^d - p^i\left(\frac{1+p}{2}\right)^{d-i}\right]+ O\left(n \left(\frac{1+p}{2}\right)^{2d}\right)\right)\\
&= \exp\left(-\left(\log 2 - \varepsilon\right)d\left(2- \left(\frac{2p}{1+p}\right)^i\right)+
o(1)\right).
\end{align*}
Thus
\begin{align}
\mathbb{E} \left(\left\vert Q_d \setminus \bigcup_{v=1}^n f(v)\right\vert ^2\right)
&= 2^d \sum_{i=0}^d \binom{d}{i}\mathbb{P}\left(\mathbf{0}, \mathbf{e}_{[i]} \notin \bigcup_{v=1}^nf(v)\right) \notag\\
&=e^{2 \varepsilon d} \sum_{i=0}^d \frac{\binom{d}{i}}{2^d} \exp\left( \left(\frac{2p}{1+p}\right)^i\left(\log 2- \varepsilon\right) d+o(1)\right).\label{square of expected number of uncovered vertices}
\end{align}
Pick $\eta: \ 0 < \eta < 1/2$ sufficiently small such that 
\[\varepsilon> \eta\log\frac{1}{\eta} +(1-\eta)\log \frac{1}{1-\eta}\] is satisfied. Then
\begin{align}
\sum_{0 \leq i \leq \eta d} \frac{\binom{d}{i}}{2^d}\exp\left( \left(\frac{2p}{1+p}\right)^i\left(\log 2- \varepsilon \right)d\right)&< \eta d \frac{\binom{d}{\eta d}}{2^d} \exp\left(\left(\log 2-\varepsilon \right) d\right)\notag \\
&=\eta d \binom{d}{\eta d}e^{-\varepsilon d}=o(1). \label{uncovered vertices: small hamming distance}
\end{align}   
On the other hand, for $i > \eta d$ we have 
$\left(\frac{2p}{1+p}\right)^i\left(\log 2- \varepsilon\right) d =o(1), 
$
since $\frac{2p}{1+p}<1$, so that 
\begin{align}
\sum_{i\geq \eta d} &\frac{\binom{d}{i}}{2^d}\exp\left( \left(\frac{2p}{1+p}\right)^i\left(\log 2- \varepsilon\right) d+o(1)\right)\notag \\
&\leq \sum_{i\geq \eta d}\frac{\binom{d}{i}}{2^d}e^{o(1)}\leq 1+o(1). \label{uncovered vertices: large hamming distance}
\end{align}
Substituting the bounds (\ref{uncovered vertices: small hamming distance}) and (\ref{uncovered vertices: large hamming distance}) into (\ref{square of expected number of uncovered vertices}), we get
\begin{align*}
\mathbb{E}\left(\left\vert Q_d \setminus \bigcup_{v=1}^n f(v)\right\vert ^2\right) &\leq e^{2 \varepsilon d} (1+o(1))= \left(1+o(1)\right)\left(\mathbb{E}\left\vert Q_d \setminus \bigcup_{v=1}^n f(v)\right\vert\right)^2,
\end{align*}
whence $\textrm{Var} \vert Q_d \setminus \bigcup_{v=1}^n f(v)\vert = o\left(\mathbb{E}\left\vert Q_d \setminus \bigcup_{v=1}^n f(v)\right\vert^2 \right)$. It follows by Chebyshev's inequality that whp  
$\bigcup_{v=1}^n f(v)$ leaves $(1+o(1))e^{\varepsilon d}$ elements of $Q_d$ uncovered when $n \leq \left(\frac{2}{1+p}\right)^dd\left(\log 2 - \varepsilon\right)$, as claimed.
\end{proof}

\subsection{Above the covering threshold}\label{binomial: above covering}
\begin{proof}[Proof of Theorem~\ref{binomial s>>d/logd}]
Without loss of generality, we may assume that $V=[n]$. Fix $\varepsilon>0$, and let $s=s(d)$ be a sequence of natural numbers with $\frac{s \log d}{d}\rightarrow \infty$ as $d\rightarrow \infty$.

\noindent \textbf{Upper bound:}
Here, unlike in the proof of Theorem~\ref{binomial s<<d/log d}, we eschew estimates of the total number of $s$-cliques present in $G_{v,d,p}$, but proceed instead via  a covering argument. Indeed, by the Helly property, $G_{[n],d,p}$ contains an $s$-clique if and only if some element of the ambient hypercube $Q_d$ is contained in at least $s$ feature subcubes. Denote by 
\[\mathrm{Vol}[n]:=\sum_{v=1}^n \vert f(v)\vert\]
the sum of the sizes of the feature subcubes. By linearity of expectation,
\begin{align*}
\mathbb{E}\mathrm{Vol}[n]&=n \mathbb{E}\vert f(1)\vert =n(1+p)^d.
\end{align*}
Set $x= \log \frac{2}{1+p} + \frac{\log s}{d}+ \frac{\varepsilon}{d}$. For $n\geq\lceil e^{xd}\rceil$, we have $\mathbb{E} \mathrm{Vol}[n]\geq e^{\varepsilon}s 2^d$, which means that elements of the ambient hypercube are expected to be contained in $e^{\varepsilon}s>s$ feature subcubes. Thus, to show that $G_{[n],d, p}$ whp contains (many) $s$-cliques for this value of $n$, it is enough to show that $\mathrm{Vol}[n]$ is concentrated about its mean. Again, we use the second-moment method to do this. By linearity of variance we have
\begin{align*}
\textrm{Var}\mathrm{Vol}[n]=n \textrm{Var}(f(1))&=n\left(\left(\sum_{i=1}^d \binom{d}{i} (1-p)^{d-i}p^i 2^{2i} \right) -(1+p)^{2d} \right)\\
&=n\left((1+3p)^d- (1+2p+p^2)^d\right).
\end{align*}
Applying Chebyshev's inequality,
\begin{align*}
\mathbb{P}\left(\mathrm{Vol}[n]< s 2^d\right) &=\mathbb{P}\left(\mathrm{Vol}[n]< e^{-\varepsilon}\mathbb{E}\mathrm{Vol}[n]\right)\\
&\leq \frac{\textrm{Var} \mathrm{Vol}[n]}{\left(1-e^{-\varepsilon}\right)^2 \left(\mathbb{E}\textrm{Vol}[n]\right)^2}\\
&< \frac{(1+3p)^d}{\left(1-e^{-\varepsilon}\right)^2 n (1+p)^{2d}}\\
&\leq\frac{1}{\left(1-e^{-\varepsilon}\right)^2} \frac{(1+3p)^d}{2^d(1+p)^d}\qquad \textrm{(substituting in the value of $n$)}\\
&= \frac{1}{\left(1-e^{-\varepsilon}\right)^2} \left(1- \left(\frac{1-p}{2(1+p)}\right)\right)^d =o(1).
\end{align*}
In particular,
\begin{align*}
\mathbb{P}\left(G_{[n,d,p]} \textrm{ contains an $s$-clique}\right)&\geq 1-\mathbb{P}\left(\mathrm{Vol}[n] < s2^d\right)=1-o(1),
\end{align*}
proving the claimed upper bound on the threshold for the emergence of $s$-cliques.
\begin{remark}
The proof above actually shows a little more: for $x> \log \left(\frac{1+3p}{(1+p)^2}\right)$,  $\textrm{Var}\mathrm{Vol}[n]=o\left(\left(\mathbb{E} \mathrm{Vol}[n]\right)^2 \right)$ and thus by Chebyshev's inequality whp $\mathrm{Vol}[n]=(1+o(1))n(1+p)^d$. In other words there are sufficiently many feature subcubes at this point that the large variance of their individual volumes ceases to matter. Note that this occurs before the covering threshold, since $\log \left(\frac{1+3p}{(1+p)^2}\right)< \log \frac{2}{1+p}$.
\end{remark}

\noindent \textbf{Lower bound when $s= O(d)$:} in this case we use Markov's inequality just as in the proof of Theorem~\ref{binomial s<<d/log d}. Set $x= \log\frac{2}{1+p}+ \frac{\log s}{d}- \varepsilon\frac{ \log d}{d}$, and let $n=\lfloor e^{xd} \rfloor$.
 Let $X=X(d)$ be the number of $s$-cliques in the graph. Then
\begin{align*}
\mathbb{E} &X=\binom{n}{s}\left(2\left(\frac{1+p}{2}\right)^s-p^s\right)^d\\
&= \exp\left( s xd - s \log s - ds\log \frac{2}{1+p} + d\log\left(2- \left(\frac{2p}{1+p}\right)^s\right)+O\left(s\right)  \right)\\
&= \exp\left(-s\varepsilon \log d +O\left(\max(s,d) \right)\right)
=o(1) \qquad \textrm{(since $s\log d\gg \max(s,d)$)}
\end{align*}
so that whp $X=0$ and $G_{[n],d,p}$ contains no $s$-clique.

\noindent \textbf{Lower bound when $s\gg d$:} here we use a covering idea. Suppose $n = e^{-\varepsilon}s \left(\frac{2}{1+p}\right)^d$. 
The number $C_{\mathbf{0}}$ of feature subcubes containing the element $\mathbf{0}=(0, 0\ldots 0)$ is the sum of $n$ independent identically distributed Bernoulli random variables with parameter $\left(\frac{1+p}{2}\right)^d$. We have $\mathbb{E} C_{\mathbf{0}}= n\left(\frac{1+p}{2}\right)^d \leq e^{-\varepsilon}s$. Applying a Chernoff bound, we deduce that
\begin{align*}
\mathbb{P}(C_{\mathbf{0}}\geq s)&\leq e^{-\frac{\varepsilon^2}{3}s}.
\end{align*}
In particular the expected number of elements of $Q_d$ contained in at least $s$ feature subcubes 
is $\vert Q_d\vert \mathbb{P}(C_{\mathbf{0}}\geq s) \leq 2^de^{-\frac{\varepsilon^2}{3}s}$, which is $o(1)$ for $s\gg d$. It follows by Markov's inequality that whp there is no such element, and thus, by the Helly property for subcube intersections, that whp $G_{[n],d, p}$ contains no copy of $K_s$. Further by monotonicity of the property of containing an $s$-clique, whp $G_{[n'],d ,p}$ fails to contain a $K_s$ for any $n'\leq n$.
\end{proof}

\section{The uniform model}\label{uniformsection}
In this section, we prove our results for the uniform model. We note that these are generally less precise than those we obtained for the binomial model, owing to the greater difficulty of performing clique computations.

\subsection{Summary}
Fix $s\in\mathbb{N}$. We established in Section~\ref{binomialsection} (Proposition~\ref{prop: dimension of subcubes in s-cliques}) that in $G_{V,d,p}$, whp the feature subcubes of the vertices in the first $s$-cliques to appear as we increase $\vert V\vert$ all have dimension $(\alpha_s +o(1))d$, where $\alpha_s$ is the function:
\[\alpha_s: \ p \mapsto \frac{p\left(2\left(\frac{1+p}{2}\right)^{s-1}-p^{s-1}\right)}{\left(2\left(\frac{1+p}{2}\right)^s-p^s\right)}.\]
We show in Proposition~\ref{alphas inverse} that $\alpha_s$ is a bijection from $(0,1)$ to itself. This will allow us to determine the threshold for the appearance of $s$-cliques in the uniform model.

\begin{theorem}\label{uniform: below the threshold}
Let $\alpha \in (0,1)$ and $s\in \mathbb{N}$ be fixed, and let $k(d)=\lfloor \alpha d \rfloor$. Set $p={\alpha_s}^{-1}(\alpha)$. Then, the threshold for the appearance of $s$-cliques in $G_{V,d, k}$ is
\[T_{K_s}(\alpha)=t_{K_s}(p)+ \alpha \log \frac{p}{\alpha}+(1-\alpha)\log\frac{1-p}{1-\alpha}.\]
\end{theorem}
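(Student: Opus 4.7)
The strategy is to reduce to the binomial model of Section~\ref{binomialsection}, using the elementary observation that conditioning a binomial-$p$ subcube on having exactly $k$ stars yields a uniform $k$-subcube. Fix $p=\alpha_s^{-1}(\alpha)$, which is well-defined by Proposition~\ref{alphas inverse}. With this choice, Proposition~\ref{prop: dimension of subcubes in s-cliques} tells us that the first $s$-cliques to appear in the binomial model $G_{V',d,p}$ have feature subcubes concentrated at dimension $\alpha_s(p)d=\alpha d$, exactly matching our target dimension $k=\lfloor\alpha d\rfloor$.

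The main computation is to pin down
\[
q_{\mathrm{unif}}(s,k,d):=\mathbb{P}\bigl[\text{$s$ independent uniform $k$-subcubes of $Q_d$ share a common point}\bigr].
\]
Applying the conditioning observation to $s$ independent binomial-$p$ subcubes yields the identity
\[
q_{\mathrm{unif}}(s,k,d)=\frac{\mathbb{P}\bigl[\text{the $s$ subcubes form a clique and each has exactly $k$ stars}\bigr]}{\bigl[\binom{d}{k}p^k(1-p)^{d-k}\bigr]^s}.
\]
By Stirling the denominator equals $\exp\!\bigl(sd\,[\alpha\log(p/\alpha)+(1-\alpha)\log((1-p)/(1-\alpha))]+O(\log d)\bigr)$. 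Bounding the numerator above by the unrestricted clique probability $q(s,1)^d=e^{-sdt_{K_s}(p)}$ immediately gives $q_{\mathrm{unif}}(s,k,d)\le e^{-sdT_{K_s}(\alpha)+O(\log d)}$. For a matching lower bound, decompose
\[
q(s,1)^d=\sum_{\vec k\in\{0,\ldots,d\}^s}\left[\prod_{i=1}^s\binom{d}{k_i}p^{k_i}(1-p)^{d-k_i}\right]Q(s,\vec k,d),
\]
where $Q(s,\vec k,d)$ is the probability that $s$ independent uniform subcubes of dimensions $k_1,\ldots,k_s$ share a common point. A Laplace/saddle-point analysis, using Proposition~\ref{alpha-s unique minimum of t-alpha-Ks} together with strict convexity of $t^\alpha_{K_s}$ at its unique minimum $\alpha_s$, shows the diagonal profile $\vec k=(k,\ldots,k)$ dominates up to an $e^{o(d)}$ factor, yielding $q_{\mathrm{unif}}(s,k,d)\ge e^{-sdT_{K_s}(\alpha)-o(d)}$.

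With these two-sided bounds in hand, the rest of the argument mirrors the proof of Theorem~\ref{binomial s<<d/log d}. When $\log n/d\le T_{K_s}(\alpha)-\varepsilon$ the expected number of $s$-cliques $\mathbb{E}[X]=\binom{n}{s}q_{\mathrm{unif}}$ tends to zero, and Markov's inequality gives whp no $s$-clique. When $\log n/d\ge T_{K_s}(\alpha)+\varepsilon$ the same estimate gives $\mathbb{E}[X]\to\infty$, and I would apply Chebyshev. The second-moment computation expands $\mathbb{E}[X^2]$ over the overlap $i$ between two $s$-subsets; the cross-clique probability $b_i'$ in $G_{V,d,k}$ reduces, by the same conditioning trick, to a binomial computation, and a uniform analogue of Lemma~\ref{technical lemma} combined with the threshold value of $n$ yields $b_i'/b_0'\le n^{-i}\,\mathrm{poly}(d)$, so that $\mathrm{Var}(X)=o(\mathbb{E}[X]^2)$.

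The principal obstacle is the saddle-point lower bound on $q_{\mathrm{unif}}$: controlling the profile sum requires analysing the Hessian of $t^\alpha_{K_s}$ at $\alpha_s$ and summing over a diagonal window around $(k,\ldots,k)$. Everything else—the Stirling estimates, the Markov and Chebyshev bounds, and the Lemma~\ref{technical lemma} analogue—is a routine extension of the binomial case.
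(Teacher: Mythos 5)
Your proposal takes a genuinely different route from the paper. The paper never estimates the uniform clique probability $q_{\mathrm{unif}}(s,k,d)$ at all: it observes that the vertices of $G_{[N],d,p}$ whose feature subcubes have a prescribed dimension induce an exact copy of the uniform model of that dimension, so the lower bound is immediate (a $K_s$-free binomial graph has a $K_s$-free induced uniform subgraph whose size is controlled by a Chernoff bound), while the upper bound follows from Proposition~\ref{prop: dimension of subcubes in s-cliques} together with a lifting step that raises every subcube of dimension in $[(\alpha-2\eta)d,\lfloor\alpha d\rfloor]$ to dimension exactly $\lfloor\alpha d\rfloor$ without destroying edges. That coupling buys the paper the right never to prove a lower bound on $q_{\mathrm{unif}}$ and never to redo a second moment in the uniform model. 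Your approach is more self-contained in one direction: your Markov bound for $x\leq T_{K_s}(\alpha)-\varepsilon$ needs only the trivial upper bound on $q_{\mathrm{unif}}$ and is arguably cleaner than the paper's lower-bound argument. Your second-moment step is also sound: after the conditioning identity, the powers of $D=\binom{d}{k}p^k(1-p)^{d-k}$ appearing in $b_i'$ and in the threshold value of $n$ cancel, so the ratio $\binom{n-s}{s-i}\binom{s}{i}b_i'/\bigl(\binom{n-s}{s}b_0'\bigr)$ reduces to the binomial ratio of Theorem~\ref{binomial s<<d/log d} times a $d^{O(1)}$ factor, and Lemma~\ref{technical lemma} itself suffices --- no ``uniform analogue'' needs to be proved. (You assert rather than verify this cancellation, and it silently uses the lower bound on $b_0'=q_{\mathrm{unif}}^2$ discussed below.)

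The one genuine gap is the step you yourself flag: the lower bound $q_{\mathrm{unif}}(s,k,d)\geq e^{-sdT_{K_s}(\alpha)-o(d)}$. The tool you point to is not the right one. The function $t^{\alpha}_{K_s}$ and Proposition~\ref{alpha-s unique minimum of t-alpha-Ks} govern a one-variable optimisation (one distinguished vertex of prescribed dimension, the other $s-1$ unrestricted), whereas your profile sum requires showing that $\vec{k}\mapsto\bigl[\prod_{j}\binom{d}{k_j}p^{k_j}(1-p)^{d-k_j}\bigr]Q(s,\vec{k},d)$ is maximised, to within $e^{o(d)}$, on the diagonal $k_j\equiv\lfloor\alpha d\rfloor$; the ``Hessian of $t^{\alpha}_{K_s}$'' says nothing about the off-diagonal directions of this $s$-variable problem. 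The clean way to close the gap is not a Laplace computation but the product structure over coordinates: conditional on the $s$ subcubes forming a clique, the vector of their dimensions is a sum of $d$ i.i.d.\ random vectors in $\{0,1\}^s$ whose mean is $(\alpha_s(p),\ldots,\alpha_s(p))$ (this is exactly the paper's remark that $\alpha_s$ is the conditional probability of $\star$ given a clique), so a multivariate local limit theorem gives $\mathbb{P}\bigl(\text{all dimensions equal }\lfloor\alpha d\rfloor\mid\text{clique}\bigr)=d^{-O(1)}$ when $p=\alpha_s^{-1}(\alpha)$, which is precisely the bound you need. With that supplied, your argument goes through; without it, the existence half of the theorem is unproved.
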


\begin{theorem}\label{uniform: at the threshold}
Let $\alpha \in (0,1)$ and $\varepsilon>0$ be fixed, and let $k(d)=\lfloor \alpha d \rfloor$.  Let $V=V(d)$ be a sequence of vertex sets with $\vert V(d)\vert = e^{xd}$. Then, for the uniform model $G_{V,d,k}$,
\[\lim_{d\rightarrow \infty} \mathbb{P}\left(\bigcup_{v \in V}f(v)=Q_d\right)=\left\{\begin{array}{ll}
0 & \textrm{if $x(d)\leq (1-\alpha)\log 2 + \frac{\log d}{d} + \frac{\log \left(\log 2 -\varepsilon\right)}{d}$} \\
1& \textrm{if $x(d)\geq (1-\alpha)\log 2 + \frac{\log d}{d} +\frac{\log \left(\log 2 +\varepsilon\right)}{d}$.}
\end{array}\right.\]
\end{theorem}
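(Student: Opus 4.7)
The plan is to mirror the proof of Theorem~\ref{binomial covering threshold}, applying the first and second moment methods to the random variable
\[ U := \left\vert Q_d \setminus \bigcup_{v\in V} f(v) \right\vert \]
counting the number of uncovered elements of the hypercube. Fix $\mathbf{y}\in Q_d$. A uniformly random $k$-subcube is obtained by selecting a $k$-subset $S\subseteq [d]$ of ``free'' coordinates and an assignment $\sigma\in\{0,1\}^{[d]\setminus S}$ for the remaining ``fixed'' coordinates; conditioning on $S$, the element $\mathbf{y}$ lies in the subcube exactly when $\sigma$ agrees with $\mathbf{y}$ on $[d]\setminus S$, so that
\[ q:=\mathbb{P}(\mathbf{y}\in f(v)) = 2^{-(d-k)}. \]
Since $k=\lfloor\alpha d\rfloor$ we have $2^{-(d-k)} = 2^{-(1-\alpha)d+O(1)}$, and so at the stated threshold $nq = (\log 2 \pm \varepsilon)d\,(1+o(1))$. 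Then
\[ \mathbb{E}U = 2^d (1-q)^n = \exp\bigl(d\log 2 - nq(1+o(1))\bigr) = e^{\mp \varepsilon d + o(d)}, \]
and Markov's inequality immediately yields the upper direction: whp $U=0$ once $x$ exceeds the threshold.

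For the lower direction I will use Chebyshev. For a pair of points $\mathbf{y},\mathbf{y}'\in Q_d$ at Hamming distance $i$, the probability that a single uniform $k$-subcube $f(v)$ contains both is obtained by requiring that all $i$ differing coordinates lie among the $k$ free coordinates and that the remaining fixed coordinates match the common values of $\mathbf{y}$ and $\mathbf{y}'$. A direct count gives
\[ q_i = \frac{\binom{d-i}{k-i}}{\binom{d}{k}\,2^{d-k}} = q\,\prod_{j=0}^{i-1}\frac{k-j}{d-j}\ \leq\ q\alpha^i \qquad (\text{for }i\leq k), \]
and $q_i=0$ for $i>k$. The computation
\[ \mathbb{E}U^2 = 2^d\sum_{i=0}^d \binom{d}{i}(1-2q+q_i)^n, \]
together with the identity $(1-2q+q_i) = (1-q)^2 + (q_i - q^2)$ and the observation that $nq^2 = o(1)$, reduces the variance estimate to showing
\[ \sum_{i=0}^d \frac{\binom{d}{i}}{2^d}\exp\bigl(nq_i(1+o(1))\bigr) = 1+o(1), \]
whence $\textrm{Var}(U) = o\bigl((\mathbb{E}U)^2\bigr)$ and Chebyshev's inequality delivers $U>0$ whp.

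The main obstacle is controlling this last sum in the small-$i$ regime, where $q_i$ is no longer negligible relative to $q^2$. For $i\geq\eta d$ the factor $\alpha^i$ is exponentially small in $d$, so $nq_i=o(1)$ and this tail contributes at most $1+o(1)$. For $i<\eta d$ I follow the trick of the binomial proof: fix $\eta\in(0,1/2)$ small enough that the binary entropy satisfies $H(\eta)<\varepsilon$; the crude bounds $\binom{d}{i}\leq d\binom{d}{\lfloor \eta d\rfloor}=e^{dH(\eta)+o(d)}$ and $\exp(nq_i)\leq \exp((\log 2-\varepsilon)d(1+o(1)))$ then give
\[ \sum_{i<\eta d}\frac{\binom{d}{i}}{2^d}\exp\bigl(nq_i(1+o(1))\bigr) \leq \eta d\cdot e^{d(H(\eta)-\varepsilon)+o(d)} = o(1), \]
which closes the argument. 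The remaining pieces (the precise expansion of $\log(1-q)$, and the bookkeeping absorbing $\{\alpha d\}$ and similar $O(1)$ fluctuations into the lower-order terms) are routine and parallel Section~\ref{binomial: at covering}.
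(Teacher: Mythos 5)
Your proposal is correct and follows essentially the same route as the paper's proof: first moment plus Markov for the covering direction, and a second-moment/Chebyshev argument over pairs of points grouped by Hamming distance, split at $i=\eta d$ with $H(\eta)<\varepsilon$, for the non-covering direction. The only difference is that you explicitly supply the tail estimate (via $q_i\leq q\alpha^i$) that the paper omits with a reference to the binomial case; note that the residual $2^{\{\alpha d\}}$ discrepancy between $2^{d-k}$ and $2^{(1-\alpha)d}$ that you wave off as bookkeeping is a genuine constant factor in $nq$, but it is present in the paper's own passage from the theorem statement to its proof as well.
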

\begin{corollary}\label{uniform: covering threshold}
Let $\alpha \in (0,1)$ be fixed, and let $k(d)=\lfloor \alpha d \rfloor$. Then the threshold for covering the ambient hypercube $Q_d$ with the feature subcubes from $G_{V,d,k}$ is
\[T_{\textrm{cover}}(\alpha)=(1-\alpha) \log 2.\]
\end{corollary}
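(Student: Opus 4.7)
The corollary is an immediate consequence of Theorem~\ref{uniform: at the threshold}, so the plan is simply to unpack the definition of a threshold and verify that the correction terms in the theorem vanish in the limit $d\to\infty$. The only thing to check is that the window of width $O(\log d / d)$ around $(1-\alpha)\log 2$ given by the theorem shrinks to the single point $(1-\alpha)\log 2$.

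Fix an arbitrary $\varepsilon_0 \in (0, \log 2)$, and set $\varepsilon = \varepsilon_0$ in the statement of Theorem~\ref{uniform: at the threshold}. Suppose first that $V=V(d)$ is a sequence with $|V(d)| \leq e^{xd}$ for some $x < (1-\alpha)\log 2$. Writing $\delta = (1-\alpha)\log 2 - x > 0$, note that
\[
\frac{\log d}{d} + \frac{\log(\log 2 - \varepsilon_0)}{d} \longrightarrow 0 \quad \text{as } d \to \infty,
\]
so that for all sufficiently large $d$ the right-hand side exceeds $-\delta$. Consequently
\[
x \;\leq\; (1-\alpha)\log 2 + \frac{\log d}{d} + \frac{\log(\log 2 - \varepsilon_0)}{d}
\]
eventually, and the lower bound in Theorem~\ref{uniform: at the threshold} applies to give that whp the feature subcubes do not cover $Q_d$.

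Symmetrically, if $|V(d)| \geq e^{xd}$ for some $x > (1-\alpha)\log 2$, then setting $\delta' = x - (1-\alpha)\log 2 > 0$ and noting that $\frac{\log d}{d} + \frac{\log(\log 2 + \varepsilon_0)}{d} \to 0$, we have
\[
x \;\geq\; (1-\alpha)\log 2 + \frac{\log d}{d} + \frac{\log(\log 2 + \varepsilon_0)}{d}
\]
for all sufficiently large $d$, and the upper bound in Theorem~\ref{uniform: at the threshold} applies to give that whp $\bigcup_{v\in V} f(v) = Q_d$. This matches the definition of a threshold given in the introduction, establishing $T_{\textrm{cover}}(\alpha) = (1-\alpha)\log 2$.

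There is no real obstacle here; the corollary is just the extraction of the leading order term from the sharper bounds already proved. All the work is contained in Theorem~\ref{uniform: at the threshold}.
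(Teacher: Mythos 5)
Your proposal is correct and matches the paper's (implicit) reasoning: the corollary is stated without proof precisely because it follows from Theorem~\ref{uniform: at the threshold} by observing that the $\frac{\log d}{d}+\frac{\log(\log 2\pm\varepsilon)}{d}$ correction terms vanish as $d\to\infty$, which is exactly the argument you give. No issues.
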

\begin{remark}
As we observed in Remark~\ref{alphas monotonicity}, we have $\lim_{s\rightarrow \infty}\alpha_s(p)=\frac{2p}{1+p}$. From this we deduce that for large $s$, we have $\alpha_s^{-1}(\alpha)=\frac{\alpha}{2-\alpha}+o(1)$. Substituting this into $T_{K_s}(\alpha)$, we see that 
\[T_{K_s}(\alpha)\rightarrow T_{\textrm{cover}}(\alpha)\]
as $s\rightarrow \infty$, mirroring our observation in Remark~\ref{covering as limit of clique thresholds} for the binomial model.
\end{remark}

\begin{theorem}\label{uniform: above the threshold}
Let $\alpha \in (0,1)$ and $\varepsilon>0$ be fixed, and let $k(d)=\lfloor \alpha d \rfloor$. Let $s=s(d)$ be a sequence of natural numbers with $\frac{s}{d}\rightarrow \infty$ as $d\rightarrow \infty$. Suppose $V=V(d)$ is a sequence of vertex sets. Then,
\[\lim_{d\rightarrow \infty} \mathbb{P}\left(G_{V,d,k} \textrm{ contains an $s$-clique} \right)=\left\{\begin{array}{ll}
0 & \textrm{if $\vert V(d)\vert \leq (1-\varepsilon)s 2^{d-k}$} \\
1& \textrm{if $\vert V(d)\vert \geq (s-1)2^{d-k}+1$}.
\end{array}\right.\]
\end{theorem}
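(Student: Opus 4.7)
The plan is to reduce both directions of the theorem to statements about $C_y := |\{v \in V : y \in f(v)\}|$, the number of feature subcubes containing a given point $y \in Q_d$. By the Helly property noted in the introduction, $G_{V,d,k}$ contains an $s$-clique if and only if some $y \in Q_d$ satisfies $C_y \geq s$. Since a $k$-dimensional subcube of $Q_d$ has volume $2^k$, a uniformly random such subcube contains any fixed $y$ with probability $2^{k-d}$, so $C_y \sim \mathrm{Bin}(|V|, 2^{k-d})$; this random variable is the main object of study.

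The upper bound on the appearance of an $s$-clique is in fact deterministic. Setting $n := (s-1)2^{d-k}+1$, the total feature volume satisfies
\[\sum_{v \in V}|f(v)| = n \cdot 2^k = (s-1)2^d + 2^k > (s-1)\cdot 2^d,\]
hence $\sum_{y \in Q_d} C_y > (s-1) 2^d$. Since the $C_y$ are non-negative integers summed over $2^d$ points of $Q_d$, some $y$ must satisfy $C_y \geq s$, and the Helly property yields an $s$-clique. Monotonicity in $|V|$ extends the conclusion to all $|V| \geq n$.

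For the lower bound, suppose $|V| = n \leq (1-\varepsilon) s 2^{d-k}$, so that $\mu := \mathbb{E} C_y = n \cdot 2^{k-d} \leq (1-\varepsilon) s$. The multiplicative Chernoff bound applied to the binomial upper tail yields
\[\mathbb{P}(C_y \geq s) \leq \exp\bigl(-c(\varepsilon)\,s\bigr)\]
for some constant $c(\varepsilon) > 0$ depending only on $\varepsilon$. A union bound over $Q_d$ then gives
\[\mathbb{E}\bigl|\{y \in Q_d : C_y \geq s\}\bigr| \leq 2^d \exp\bigl(-c(\varepsilon)\,s\bigr) = \exp\bigl(d\log 2 - c(\varepsilon)\,s\bigr) = o(1),\]
where the last equality is precisely where the hypothesis $s/d \to \infty$ enters: it ensures the $c(\varepsilon)\,s$ term dominates the $d\log 2$ cost of the union bound. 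Markov's inequality combined with Helly then rules out $s$-cliques whp, and monotonicity extends the conclusion to all smaller $|V|$. The only real subtlety is precisely this growth requirement on $s$ — it is what makes the crude union bound work, and without it one would have to proceed more delicately, as is visible in the more intricate analysis of Theorem~\ref{binomial s>>d/logd}.
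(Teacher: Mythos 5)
Your proposal is correct and follows essentially the same route as the paper: the upper bound is the same deterministic pigeonhole/volume argument, and the lower bound is the same Chernoff-plus-union-bound estimate on the number of feature subcubes covering a fixed point, with $s\gg d$ absorbing the $2^d$ union-bound cost.
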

\begin{remark}
Theorem~\ref{uniform: below the threshold} and Corollary~\ref{uniform: covering threshold} show how significant `outliers' (subcubes with unusually high dimension) are for the behaviour of the binomial model. Indeed, Proposition~\ref{prop: dimension of subcubes in s-cliques} tells us that for $0<p<1$ fixed and $s\geq 3$, the vertices in the first $s$-clique to appear in $G_{V,d,p}$ have feature subcubes of dimension $\left( \alpha_s(p)+o(1)\right) d$. Since $\alpha_s(p)>p$ it shall follow straightforwardly from the proof of Theorem~\ref{uniform: below the threshold} that $t_{K_s}(p)<T_{K_s}(p)$. Similarly, by Corollaries~\ref{binomial covering threshold} and~\ref{uniform: covering threshold}, we have for $0<p<1$ fixed that 
\[t_{\textrm{cover}}(p)=\log\frac{2}{1+p}<(1-p)\log 2 =T_{\textrm{cover}}(p).\]

From the covering threshold upwards, Corollary~\ref{uniform: covering threshold} and Theorem~\ref{uniform: above the threshold} suggest that, when considering  questions about cliques and covering, the right instance of the binomial model to compare $G_{V,d,\lfloor \alpha d\rfloor}$ with is $G_{V,d, p}$ with $p=2^{\alpha}-1$ (rather than $p=\alpha$ as we might have expected). For these two models, the covering threshold and the thresholds for higher order cliques coincide. Since both models have the same expected volume of feature subcubes, this vindicates the use of volume/covering arguments for determining the thresholds for higher order cliques. Note however that $G_{V,d, \lfloor \alpha d \rfloor}$ and $G_{V,d, 2^{\alpha}-1}$ have different thresholds for lower order cliques. 
Our binomial model and uniform model thus behave differently, and there is no good coupling between them below the covering threshold.
\end{remark}

Finally, let us add that, just as in the binomial model, the clique number $\omega(G_{V,d,k})$ in the uniform model undergoes a transition around the covering threshold. 
\begin{corollary}
Let $\alpha\in (0,1)$ be fixed and let $k=k(d)=\lfloor \alpha d \rfloor$. Let $V(d)$ be a sequence of vertex-sets and $x(d)= \frac{1}{d} \log \vert V(d)\vert$ as usual. The following hold:
\begin{itemize} 
\item if there is $s \in \mathbb{N}$ and $\varepsilon>0$  such that $T_{K_s}+\varepsilon<x <T_{K_{s+1}}$,
then whp $\omega(G_{V,d,k})=s$;
\item if there is $s \in \mathbb{N}$ such that  
 $x=T_{K_s}+o(1)$,
 then whp $\omega(G_{V,d,k})\in \{s, s-1\}$;
\item if there is $\gamma>1$ such that  
 $x=x(d)=T_{\textrm{cover}}+ \gamma \frac{\log d}{d}+o\left(\frac{\log d}{d}\right)$,
 then whp $\omega(G_{V,d,k})$ has order $d^{\gamma+o(1)}$;
\item if there is $c>0$ such that  
 $x=x(d)=T_{\textrm{cover}}+c+o(1)$, 
 then whp $\omega(G_{V,d,k})$ has order $e^{cd+o(d)}$.
\end{itemize}
\end{corollary}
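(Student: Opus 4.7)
The plan is to deduce each bullet directly from the sharp threshold theorems previously established in this section. The first two bullets come from Theorem~\ref{uniform: below the threshold} applied at two consecutive values of $s$, and the last two from Theorem~\ref{uniform: above the threshold} applied with appropriately chosen growing sequences $s=s(d)$. In every case we sandwich $\omega(G_{V,d,k})$ from above and below by comparing $x(d)$ either with the relevant threshold or with the quantity $\vert V(d)\vert/2^{d-k}$.

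For the first bullet, since $T_{K_s}+\varepsilon<x<T_{K_{s+1}}$, applying Theorem~\ref{uniform: below the threshold} with parameter $s$ yields whp a $K_s$ (so $\omega\geq s$), while applying it with parameter $s+1$ yields whp no $K_{s+1}$ (so $\omega\leq s$). For the second bullet, I would first verify that the sequence $(T_{K_s}(\alpha))_{s\in\mathbb{N}}$ is strictly increasing in $s$: this can be checked by a direct calculation from the closed form in Theorem~\ref{uniform: below the threshold}, using the strict monotonicity of $s\mapsto\alpha_s(p)$ recorded in Remark~\ref{alphas monotonicity} together with manipulations analogous to those of Proposition~\ref{alpha-s unique minimum of t-alpha-Ks}. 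Granted this, for $d$ large enough $x=T_{K_s}+o(1)$ lies strictly above $T_{K_{s-1}}$ and strictly below $T_{K_{s+1}}$, so Theorem~\ref{uniform: below the threshold} applied at $s-1$ and $s+1$ forces $\omega\in\{s-1,s\}$ whp.

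For the third bullet, compute $\vert V(d)\vert/2^{d-k}=e^{xd-(d-k)\log 2+O(1)}=d^{\gamma+o(1)}$, using $k=\lfloor\alpha d\rfloor$, $d-k=(1-\alpha)d+O(1)$, and the hypothesis on $x$. For any fixed $\delta\in(0,\gamma-1)$, set $s_{-}(d)=\lceil d^{\gamma-\delta}\rceil$ and $s_{+}(d)=\lceil d^{\gamma+\delta}\rceil$; since $\gamma-\delta>1$, both satisfy $s_{\pm}(d)/d\to\infty$, so Theorem~\ref{uniform: above the threshold} applies. A direct check of its two thresholds sandwiches $\omega$ between $s_{-}(d)$ and $s_{+}(d)$ whp, giving $\omega=d^{\gamma+o(1)}$ after taking a diagonal sequence $\delta=\delta(d)\to 0$. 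For the fourth bullet, the identical argument with $s_{\pm}(d)=\lceil e^{(c\pm\delta)d}\rceil$ for fixed $\delta\in(0,c)$ yields $\omega=e^{cd+o(d)}$. The only genuine obstacle in the whole proof is the strict monotonicity of $T_{K_s}$ in $s$ required for the second bullet: this does not follow immediately from the theorems already stated, but reduces to a routine (if slightly tedious) manipulation of the explicit formula for $T_{K_s}(\alpha)$.
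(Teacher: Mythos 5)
Your derivation is correct and is essentially the paper's intended one: the corollary is stated without proof as an immediate consequence of Theorems~\ref{uniform: below the threshold} and~\ref{uniform: above the threshold}, and your sandwiching arguments --- applying the clique threshold at $s$ and $s+1$ (resp.\ $s-1$ and $s+1$) for the first two bullets, and choosing $s_{\pm}(d)$ with $s_{\pm}/d\to\infty$ together with the computation $\vert V\vert/2^{d-k}=d^{\gamma+o(1)}$ (resp.\ $e^{cd+o(d)}$) for the last two --- are exactly how the details are meant to be filled in. You are also right to flag the strict monotonicity of $s\mapsto T_{K_s}(\alpha)$ as the one point the paper leaves implicit (it is already needed for the first bullet's hypothesis to be non-vacuous); it follows from the explicit formula in Theorem~\ref{uniform: below the threshold} by a routine calculation of the kind you describe.
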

\begin{remark}
There is a gap here: we do not know what the order of the clique number is when $x=x(d)=T_{\textrm{cover}}+\gamma\frac{\log d}{d}+o(\frac{\log d}{d})$ for a fixed real $\gamma$ with $0<\gamma \leq 1$. We make the natural conjecture that for this value of $x(d)$, we should have $\omega(G_{V,d,k})=d^{\gamma+o(1)}$, similarly to the binomial model.
\end{remark}
Theorems~\ref{uniform: below the threshold}, \ref{uniform: at the threshold} and \ref{uniform: above the threshold} are proved in Sections~\ref{subsection: uniform model below covering}, \ref{subsection: uniform model at covering} and~\ref{subsection: uniform model above covering} respectively. Our results give whp lower and upper bounds on certain hitting times for the uniform model, and we often split their proofs accordingly into two parts.

\subsection{Below the covering threshold}\label{subsection: uniform model below covering}

\begin{proposition}\label{alphas inverse}
The function $\alpha_s$ is a bijection from $[0,1]$ to itself, and has a continuous inverse over its domain.
\end{proposition}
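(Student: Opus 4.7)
The plan is to reduce the analysis of $\alpha_s$ to that of a much simpler one-parameter family of rational functions via a change of variables. Observe that with $u := (1+p)/2$, we have the identity $q_s := 2u^s - p^s = 2u^s(1 - (p/u)^s/2)$. So if we set $r := p/u = 2p/(1+p)$, then
\[\alpha_s(p) = \frac{p \, q_{s-1}}{q_s} = \frac{p}{u}\cdot\frac{1 - r^{s-1}/2}{1 - r^s/2} = \frac{2r - r^s}{2 - r^s} =: g_s(r).\]
The map $p \mapsto r(p) = 2p/(1+p)$ is clearly a continuous strictly increasing bijection from $[0,1]$ to $[0,1]$, so it suffices to prove the same for $g_s : [0,1] \to [0,1]$.

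First I would check the endpoints: $g_s(0) = 0$ and $g_s(1) = 1$. For monotonicity I would differentiate and simplify: a direct computation yields
\[(2-r^s)^2 g_s'(r) = 2\bigl(2 + (s-1)r^s - s r^{s-1}\bigr).\]
So monotonicity reduces to showing that the auxiliary polynomial $h(r) := 2 + (s-1)r^s - s r^{s-1}$ is strictly positive on $[0,1]$. The case $s=1$ is trivial ($g_1(r) = r$), so assume $s \geq 2$. Then $h'(r) = s(s-1)r^{s-2}(r-1) \leq 0$ on $[0,1]$, so $h$ is nonincreasing; combined with $h(1) = 1 > 0$, this gives $h(r) \geq 1 > 0$ on $[0,1]$, whence $g_s$ is strictly increasing.

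A continuous strictly increasing function from the compact interval $[0,1]$ onto itself is automatically a homeomorphism, so $g_s$ (and hence $\alpha_s = g_s \circ r$) is a bijection with continuous inverse. I anticipate that the only slightly delicate step is spotting the change of variables $r = 2p/(1+p)$ that converts $\alpha_s$ into the clean rational expression $g_s(r)$; once that simplification is in place, the derivative computation and the sign analysis of $h$ are entirely routine.
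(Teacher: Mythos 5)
Your proof is correct and follows essentially the same route as the paper: the same substitution $y=2p/(1+p)$ turning $\alpha_s$ into $(2y-y^s)/(2-y^s)$, the same derivative computation, and the same monotonicity argument showing the numerator polynomial stays positive on $[0,1]$. The only blemish is the parenthetical ``$g_1(r)=r$'' --- in fact $g_1(r)=r/(2-r)$ --- but the $s=1$ case is still trivial (and indeed your $h$-argument gives $h\equiv 1$ there), so nothing is affected.
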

\begin{proof}
Since  
$\alpha_s(0)=0$ and $\alpha_s(1)=1$, all we have to do is show that the derivative of $\alpha_s$ with respect to $p$ is strictly positive in $[0,1]$, whence we are done by the inverse function theorem.

Setting $y=\frac{2p}{1+p}$, we can rewrite $\alpha_s(p)$ as $\alpha_s(y)=\frac{2y-y^s}{2-y^s}$. By the chain rule,
\begin{align*}
\frac{d \alpha_s}{d p}(p)&=\left(\frac{d y}{dp}(p)\right) \left(\frac{d\alpha_s}{dy}(y(p))\right)\\
& \frac{2}{(1+p)^2}\frac{\left(4-2sy^{s-1}+2(s-1)y^s\right)}{(2-y^s)^2}. 
\end{align*}
The derivative, with respect to y, of the numerator in the expression above is 
\[2\left(-2s(s-1)(1-y)y^{s-2}\right)\leq 0\qquad (\textrm{since $y\in [0,1]$}).\] 
Thus the minimum of the numerator is attained when $y(p)=1$. In particular, 
\begin{align*}
\frac{d \alpha_s(p)}{d p}(p)&\geq \frac{2}{(1+p)^2}\frac{2}{(2-y^s)^2}>0, 
\end{align*}
as required.
\end{proof}

In general, computing an explicit closed-form expression for the inverse of $\alpha_s$ is difficult, reflecting the fact that computing the probability that the intersection of an $s$-set of $k$-dimensional subcubes chosen uniformly at random 
is non-empty is difficult (or at least unpleasant). It is for this reason that in Theorem~\ref{uniform: below the threshold} we give the thresholds for the uniform model in terms of the thresholds for the binomial model.

\begin{proof}[Proof of Theorem~\ref{uniform: below the threshold}]
The key observation is that we can view the binomial model as the result of a two stage random process. In the first stage we randomly partition the set of vertices $V$ into sets $V_0, V_1 \ldots V_d$, where 
$v\in V$ is included in $V_i$ with probability $\binom{d}{i}p^i(1-p)^{d-i}$ independently at random for each $v,i$. In the second stage for each $k$ we associate independently to each vertex in $V_k$ a feature subcube of dimension $k$ chosen uniformly at random, and then build the subcube intersection graph as normal. In particular, the restriction of $G_{V,d,p}$ to the (random) subset $V_k$ is exactly (an instance of) the uniform model $G_{V_k, d, k}$. We shall use this to pull results back from the binomial model to the uniform model.

 Let $\varepsilon>0$ and $\alpha \in (0,1)$ be fixed, and let $k(d)= \lfloor \alpha d \rfloor$. Let $p=\left(\alpha_s\right)^{-1}(\alpha)$.

\noindent{\textbf{Upper bound}:} Pick $\eta$ with $0<\eta<\alpha$. Let $p'=p'(\alpha-\eta)=\left(\alpha_s\right)^{-1}(\alpha-\eta)$. Consider the binomial random subcube intersection graph $G_{[N],d,p'}$, and let $X= \frac{\log N}{d}$.

Suppose $X= t_{K_s}(p')+\varepsilon+o(1)$. By Corollary~\ref{binomial s constant clique threshold}, $G_{[N],d, p'}$ then whp contains an $s$-clique. Further, by Proposition~\ref{prop: dimension of subcubes in s-cliques}, whp there exists such a clique in which all subcubes have dimension at least $k_-=\lceil(\alpha-2\eta)d\rceil$ and at most $k_+=\lfloor\alpha d\rfloor $.

Let $V'$ denote the set of vertices in $G_{[N], d, p}$ whose feature subcubes have dimension in the range $[k_-, k_+]$.  Since the number of vertices with a feature subcube of a given dimension is a binomial random variable, a standard Chernoff bound shows that, for $d$ large enough, the size of $V'$ is whp at most 
\[N'\leq  3\eta d  N\binom{d}{\lfloor (\alpha-\eta) d\rfloor}{p'}^{\lfloor (\alpha-\eta) d \rfloor} (1-p')^{d-\lfloor (\alpha-\eta) d\rfloor}\]

For each $v\in V'$ with a feature subcube of dimension $k'$ with $k_-\leq k' \leq k_+$, select a $(k_+ -k')$-subset of its fixed coordinates uniformly at random from all possibilities, and change those coordinates to wildcards $\star$ (i.e. to free coordinates). This gives a new feature subcube $f'(v)$ with dimension exactly $k_+$.

We now restrict our attention to the subcube intersection graph $G$ defined by $V'$ and the `lifted' feature subcubes $\left(f'(v)\right)_{v\in V'}$. Observe that the distribution on $k_+$-dimensional subcubes given by $f'$ is exactly the uniform distribution. Thus $G$ is in fact an instance of the uniform model $G_{V',d, k_+}$. Furthermore the `lifting' procedure we performed on the feature subcubes $\left(f(v)\right)_{v\in V'}$ has not destroyed any edge 
--- indeed increasing the dimension of feature subcubes can only add edges --- so that whp $G$ contains an $s$-clique.

It follows that the threshold for the appearance of $s$-cliques in the uniform model with parameter $k_+=\lfloor \alpha d \rfloor$ is at most
\begin{align*}
\frac{\log N'}{d}& \leq \frac{\log N}{d}+\left(\alpha-\eta\right) \log \frac{p'}{\alpha-\eta}+(1-\alpha+\eta)\log \frac{1-p'}{1-\alpha+\eta}+o\left(\frac{\log d}{d}\right)\\
&=t_{K_s}(p')+\left(\alpha-\eta\right) \log \frac{p'}{\alpha-\eta}+(1-\alpha+\eta)\log\frac{1-p'}{1-\alpha+\eta}+\varepsilon +o(1).
\end{align*} 
Since $\varepsilon,\eta>0$ were arbitrary, and since both $p'$ and  $t_{K_s}$ are continuous functions (of $\alpha-\eta$ and $p'=\left(\alpha_s\right)^{-1}(\alpha-\eta)$ respectively), the threshold for the appearance of $s$-cliques in the $k_+$-uniform model is at most
\begin{align*}
\lim_{\varepsilon,\eta\rightarrow 0^+}t_{K_s}(p')&+\left(\alpha-\eta\right) \log \frac{p'}{\alpha-\eta}+(1-\alpha+\eta)\log\frac{1-p'}{1-\alpha+\eta}+\varepsilon \\
&=t_{K_s}(p)+\alpha \log \frac{p}{\alpha}+(1-\alpha)\log \frac{1-p}{1-\alpha},
\end{align*}
proving the claimed upper bound on $T_{K_s}(\alpha)$. (Recall that $p=\left(\alpha_s\right)^{-1}(\alpha)=\lim_{\eta\rightarrow 0}p'$.)

\noindent \textbf{Lower bound:} consider the binomial random subcube intersection graph $G_{[N],d,p}$, and let $X= \frac{\log N}{d}$.

Suppose $X=t_{K_s}(p)-\varepsilon+o(1) $. By Corollary~\ref{binomial s constant clique threshold}, whp $G_{[N],d,p}$ contains no $s$-clique. In particular the subgraph of $G_{[N],d,p}$ induced by the set of vertices $V'$ whose feature subcube have dimension $\lfloor \alpha d \rfloor$ is also $K_s$-free. As we observed, this random subgraph is identical in distribution to the random uniform subcube intersection graph $G_{V', d, k}$. Let $N'=\vert V' \vert$ be the number of vertices it contains.

By a standard Chernoff bound, whp 
\[N' \geq \frac{1}{2}N \binom{d}{\lfloor\alpha d\rfloor}p^{\lfloor \alpha d \rfloor}(1-p)^{d-\lfloor \alpha d \rfloor}.\]
It follows that the threshold for the whp appearance of $s$-cliques in the uniform model $G_{V,d, k}$ is at least 
\begin{align*}
t_{K_s}(p) -\varepsilon+ \alpha \log \frac{p}{\alpha}+(1-\alpha)\log \frac{1-p}{1-\alpha}+o(1).
\end{align*} 
Since $\varepsilon>0$ was arbitrary, the claimed lower bound on $T_{K_s}(\alpha)$ follows.
\end{proof}

\subsection{The covering threshold}\label{subsection: uniform model at covering}
\begin{proof}[Proof of Theorem~\ref{uniform: at the threshold}]
 This is very similar to the proof of Theorem~\ref{binomial: at covering}. Assume without loss of generality that $V=[n]$. We let $\alpha \in (0,1)$ be fixed, set $k=k(d)=\lfloor \alpha d \rfloor$ and consider the uniform random subcube intersection graph $G_{[n], d,k}$.

 Let $\mathbf{0}$ denote the all zero element $(0,0, \ldots, 0)$ from $Q_d$. The expected number of elements of $Q_d=\{0,1\}^d$ not covered by the union of the feature subcubes $\bigcup_{v=1}^n f(v)$ is
 \begin{align*}
 \mathbb{E}\left\vert Q_d\setminus \bigcup_{v=1}^n f(v) \right\vert &= \vert Q_d \vert \mathbb{P}\left(\mathbf{0} \notin \bigcup_{v=1}^n f(v) \right)=2^d \left(1-\frac{1}{2^{d-k}}\right)^n\\
 &=\exp\left(d \log 2 - \frac{n}{2^{d-k}}\left(1+o(1)\right)\right).
 \end{align*}
 Now let $\varepsilon$ be fixed with $0< \varepsilon < \log 2$.
 
 \noindent \textbf{Upper bound:} suppose $n=e^{xd}\geq 2^{d-k}d (\log 2 + \varepsilon)$. Then the expected number of uncovered elements of $Q_d$ is $e^{-\varepsilon d +o(d)}=o(1)$, whence by Markov's inequality we have that whp $\bigcup_{v=1}^n f(v)=Q_d$, as desired.

 \noindent \textbf{Lower bound:} suppose $n=e^{xd}= \lfloor 2^{d-k}d (\log 2 - \varepsilon)\rfloor$. Then the expected number of uncovered elements of $Q_d$ is $e^{\varepsilon d +o(d)}$, which is large, and we use Chebyshev's inequality to show the actual number of uncovered elements is concentrated about this value. As before we compute the expectation of the square of the number of uncovered elements by considering pairs of points lying at Hamming distance $i$ from one another. Let $\mathbf{e}_{[i]}$ denote the element of $Q_d$ with $1$ in the first $i$ coordinates and $0$ otherwise. In the following we take binomial coefficients of negative values to be 0.
 \begin{align*}
 \mathbb{E} \left(\left\vert Q_d\setminus \bigcup_{v=1}^n f(v) \right\vert^2\right)&=2^d\sum_{i=0}^d \binom{d}{i} \mathbb{P}\left( \mathbf{0},\mathbf{e}_{[i]} \notin \bigcup_{v=1}^n f(v) \right)\\
 %
 %
 %
 &= 2^d\sum_{i=0}^d \binom{d}{i}\left(1 - \frac{2}{2^{d-k}}+\frac{\binom{d-i}{k-i}}{\binom{d}{k}}\frac{1}{2^{d-k}}\right)^n\\
 &= e^{2\varepsilon d}\sum_{i=1}^d \frac{\binom{d}{i}}{2^d}\exp\left(\frac{\binom{d-i}{k-i}}{\binom{d}{k}}d(\log 2- \varepsilon)+o(1) \right).
 \end{align*}
 We now bound the sum above just as we did in the proof of Theorem~\ref{binomial: at covering}, to show
 \begin{align*}
  \mathbb{E} \left(\left\vert Q_d\setminus \bigcup_{v=1}^n f(v) \right\vert^2\right)=(1+o(1))e^{2\varepsilon d}.
  \end{align*}
 Since the details are similar, we omit them. We deduce just as in Theorem~\ref{binomial: at covering} that $\textrm{Var}\left\vert Q_d \setminus \bigcup_{v=1}^n f(v)\right\vert=o\left(\mathbb{E} \left\vert Q_d \setminus \bigcup_{v=1}^n f(v)\right\vert^2 \right)$. By Chebyshev's inequality whp at least $e^{\varepsilon d +o(d)}\gg 1$ elements of $Q_d$ are not covered by $\bigcup_{v=1}^n f(v)$, as required. 
 \end{proof}

\subsection{Above the covering threshold}\label{subsection: uniform model above covering}

\begin{proof}[Proof of Theorem~\ref{uniform: above the threshold}]
This is similar to the proof of Theorem~\ref{binomial s>>d/logd}. Without loss of generality, we may assume that $V=[n]$. Fix $\varepsilon>0$ and $\alpha\in (0,1)$. Let $k=k(d)=\lfloor \alpha d \rfloor$, and consider the random subcube intersection graph $G_{[n],d,k}$.

\noindent \textbf{Upper bound:}
this case is in fact easier than for the binomial model. By the Helly property, $G_{[n],d,k}$ contains an $s$-clique if and only if some element $x$ of the ambient hypercube $Q_d$ is contained in at least $s$ feature subcubes. Now in $G_{[n],d,k}$ every feature subcube has dimension $k$, thus certainly if $n\geq \frac{(s-1)\vert Q_d \vert }{2^{k}}+1=(s-1)2^{d-k}+1$, we have by the pigeon-hole principle that some $x\in Q_d$ is contained in at least $s$ feature subcubes, and thus $G_{[n],d,k}$ contains a copy of $K_s$.

\noindent \textbf{Lower bound:} suppose $n \leq (1-\varepsilon)s 2^{d-k}$. Let $\mathbf{0}$ be the all zero element from $Q_d$. The number $C_{\mathbf{0}}$ of feature subcubes containing $\mathbf{0}$ is the sum of $n$ independent identically distributed Bernoulli random variables with parameter $2^{-(d-k)}$.
We have $\mathbb{E} C_{\mathbf{0}}= n 2^{-(d-k)}\leq(1-\varepsilon)s$. Applying a Chernoff bound, we deduce that
\begin{align*}
\mathbb{P}(C_{\mathbf{0}}\geq s)&\leq e^{-\frac{\varepsilon^2}{3}s}.
\end{align*}
In particular the expected number of elements of $Q_d$ contained in at least $s$ feature subcubes is at most $2^de^{-\frac{\varepsilon^2}{3}s}$, which is $o(1)$ for $s\gg d$. It follows by Markov's inequality and the Helly property for subcube intersections that whp $G_{[n],d, k}$ contains no copy of $K_s$.
\end{proof}

\section{Concluding remarks}\label{remarkssection}

\subsection{Small $p$ and large $p$}
In this paper we focussed on the case where $p\in (0,1)$ is fixed (in the binomial model). Let us make here a few remarks about the small $p$ and large $p$ case.

\noindent\textbf{Small $p$:} note first of all that the proofs of Theorems~\ref{binomial s<<d/log d}, \ref{binomial covering threshold} and~\ref{binomial s>>d/logd} all extend to the case when $p=p(d)\rightarrow 0$ as $d\rightarrow \infty$. Similarly, Theorems~\ref{uniform: at the threshold} and~\ref{uniform: above the threshold} for the uniform model also hold when $\alpha=\alpha(d)\rightarrow 0$ as $d\rightarrow \infty$. The proof of Theorem~\ref{uniform: below the threshold} does not, however, go through as it is stated in the paper --- it needs stronger concentration for the dimension of the feature subcubes in Proposition~\ref{prop: dimension of subcubes in s-cliques}.

There are two further remarks worth making concerning the small $p$ case. First of all, as $p$ (or $\alpha$) tends to $0$, the covering results Theorems~\ref{binomial covering threshold} and~\ref{uniform: at the threshold} `converge' to the classical coupon collector problem. Taking $\Omega=Q_d$ and independently drawing random elements from $\Omega$, we expect to make roughly $\vert \Omega \vert \log \vert \Omega \vert= 2^d d \log 2$ draws before we cover $\Omega$, and this is the limit of $\left(\frac{2}{1+p}\right)^d d \log 2$ as $p \rightarrow 0$ (binomial model) and of $2^{(1-\alpha)d}d \log 2$ as $\alpha\rightarrow 0$ (uniform model).

Secondly, for the uniform model with constant parameter $k=1$, the uniform model is closely related to bond percolation on the hypercube, which is a well-studied model of random graph in its own right (see e.g.~\cite{BollobasKohayakawaLuczak92,BorgsChayesVanderHofstadRemcoSladeSpencer06}). On the other hand, the binomial model with parameter $p=\frac{1}{d}$ is different: the dimension of its feature subcubes have an approximatively Poisson distribution, and one does see feature subcubes of large bounded dimension. These will have an impact on the thresholds for lower-order cliques --- indeed, a quick calculation shows that for $s$ 
 fixed, the expected dimension of feature subcubes in $s$-cliques of $G_{V,d,\frac{1}{d}}$ is $2-\frac{1}{d}+O(\frac{1}{d^2})$.

\noindent \textbf{Large $p$:} in this case, we expect quasirandom behaviour from $G_{V,d,p}$ (behaviour `like and Erd{\H o}s--R\'enyi random graph'). We establish it below in the special case when $p=1-\varepsilon(d)$, with $\varepsilon(d)$ of order $\frac{1}{\sqrt{d}}$, when the edge-density is of constant order.
\begin{proposition}\label{quasirandomness}
Let $\varepsilon(d)$ be a sequence of reals from the interval $[0,1]$ with $\varepsilon^2 d$ bounded away from both $0$ and $+\infty$. Then for $p=1-\varepsilon$, with probability tending to $1$ as $n\rightarrow \infty$ the graph $G_{[n],d,p}$ is quasirandom with parameter $e^{-\frac{\varepsilon^2d}{2}}$.
\end{proposition}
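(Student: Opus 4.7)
The plan is to verify quasirandomness of $G := G_{[n],d,p}$ with density $q := e^{-\varepsilon^2 d/2}$ by appealing to a classical Chung--Graham--Wilson equivalence: since $q$ lies in a compact subinterval of $(0,1)$ (the bounds on $\varepsilon^2 d$ translate directly to bounds on $q$), it suffices to show whp both
\[ e(G) = (1+o(1)) q \binom{n}{2} \qquad \text{and} \qquad \#C_4(G) \leq (1+o(1))\, q^4 n^4/8,\]
where $\#C_4(G)$ counts unlabelled $4$-cycles. In the nontrivial case the hypothesis forces $\varepsilon = \Theta(d^{-1/2})$, hence $\varepsilon^3 d = o(1)$ and $\varepsilon^4 d = o(1)$: these estimates will be used repeatedly.

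For the edge count, Remark~\ref{binomial d as intersection of binomial 1} gives edge probability $q_0 = (1-\varepsilon^2/2)^d = e^{-\varepsilon^2 d/2 + O(\varepsilon^4 d)} = (1+o(1))q$. Since feature subcubes are chosen independently across vertices, the indicators of edges on disjoint pairs of vertices are independent, so $\mathrm{Var}(e(G)) = O(n^3)$, and Chebyshev's inequality gives the required concentration of $e(G)$ about $q\binom{n}{2}$.

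For the $C_4$ count I would again invoke Remark~\ref{binomial d as intersection of binomial 1}: for distinct $u,v,w,x$, the probability that $G$ contains the cycle $u$--$v$--$w$--$x$--$u$ equals $r^d$, where $r$ is the per-coordinate probability that none of the four consecutive pairs is assigned the forbidden colour pair $\{0,1\}$. A short inclusion--exclusion gives $r = 1 - 2\varepsilon^2 + O(\varepsilon^3)$: each of the four bad events has probability $\varepsilon^2/2$, each overlapping pair (two adjacent $C_4$-edges both forbidden) contributes $\varepsilon^3/4$, and each of the two disjoint pairs contributes only $\varepsilon^4/4$. Using $\varepsilon^3 d = o(1)$ this yields $r^d = e^{-2\varepsilon^2 d}(1+o(1)) = q^4(1+o(1))$, so that summing over $4$-tuples gives $\mathbb{E}[\#C_4(G)] = (1+o(1)) q^4 n^4/8$.

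Concentration of $\#C_4$ would follow by the second-moment method. One classifies pairs of labelled $C_4$'s by the size and structure of their vertex intersection; the vertex-disjoint pairs contribute exactly $(\mathbb{E}[\#C_4])^2(1+o(1))$ by independence, while each pair sharing $j \geq 1$ vertices contributes $O(n^{8-j})$ tuples times a joint probability bounded via an analogous per-coordinate inclusion--exclusion. Combined, these overlap terms sum to $o((\mathbb{E}[\#C_4])^2)$, so Chebyshev gives the required concentration, and the Chung--Graham--Wilson equivalence concludes. The main obstacle I anticipate is the bookkeeping in this last step: in the regime $\varepsilon^2 d = \Theta(1)$ the $O(\varepsilon^3)$ correction terms in each per-coordinate joint probability must be tracked carefully (a naive bound would cost a constant factor in the exponent after raising to the $d$-th power), and one must verify that each of the finitely many overlap types is comfortably dominated by the disjoint contribution.
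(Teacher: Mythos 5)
Your proposal is correct and follows essentially the same route as the paper: the Chung--Graham--Wilson criterion applied to the edge and $C_4$ counts, with both counts concentrated via the second moment method (your per-coordinate inclusion--exclusion for the $4$-cycle probability agrees with the paper's closed-form $\left(2\left(\frac{1+p}{2}\right)^4-p^4+p^2(1-p)^2\right)^d=e^{-2\varepsilon^2d+O(\varepsilon^3d)}=q^4(1+o(1))$). The bookkeeping you flag as the main obstacle is actually a non-issue in this regime: since $\varepsilon^2 d$ is bounded above, $q$ is bounded away from $0$, so the $O(n^7)$ pairs of overlapping $4$-cycles can have their joint probability bounded crudely by $1$ and still contribute $o\left(n^8q^8\right)=o\left(\left(\mathbb{E}\,\#\{C_4\}\right)^2\right)$, which is exactly the crude estimate the paper uses.
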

\begin{proof}
We shall use the celebrated quasirandomness theorem of Chung, Graham and Wilson~\cite{ChungGrahamWilson89}, which states (amongst other things) that if the number of $K_2$ (\emph{edges}) and the number of $C_4$ ($4-cycles$) contained in a graph are `what you would expect if the graph was a typical Erd\H{o}s--R\'enyi random graph with parameter $q$', then $G$ is quasirandom with parameter $q$ (we refer the reader to~\cite{ChungGrahamWilson89} for a formal definition of quasirandomness).

Let $\varepsilon(d)$ be sequence of positive real numbers as in the statement of the proposition, and let $q=e^{-\frac{\varepsilon^2d}{2}}$. Set $p=1-\varepsilon$, and consider a labelled $4$-set $\{v_1,v_2,v_3,v_4\}$ of vertices from $G_{[n],d,p}$. The probability that $v_1v_2$ forms an edge of the graph is
\[\mathbb{P}(\textrm{edge})=\left(1-\frac{(1-p)^2}{2}\right)^d= e^{-\frac{\varepsilon^2}{2}d+O(\varepsilon^3 d)}=q(1+o(1)),\]
while the probability that all of the edges $v_1v_2$, $v_2v_3$, $v_3v_4$ and $v_4v_1$ are present in the graph is
\[\mathbb{P}(\textrm{$4$-cycle})=\left(2\left(\frac{1+p}{2}\right)^4-p^4+p^2(1-p)^2\right)^d= e^{-2\varepsilon^2d+O(\varepsilon^3 d)}=q^4(1+o(1)).\]

We now verify that the numbers of edges $\#\{K_2\}$ and of $4$-cycles $\#\{C_4\}$ are concentrated about their respective expectations. We appeal to the second moment method once more. For the edge $K_2$ we already established that $\textrm{Var}\#\{K_2\}=o\left(\left(\mathbb{E}\#\{K_2\}\right)^2\right)$ in the proof of Theorem~\ref{binomial s<<d/log d}. Thus by Chebyshev's inequality, we have the required concentration: whp $G_{[n],d,p}$ contains $(1+o(1))\mathbb{E}\#\{K_2\}=(1+o(1))\binom{n}{2}q$ edges. Regarding $4$-cycles, we have
\begin{align*}
\mathbb{E} \left(\#\{C_4\}\right)^2&= \binom{n}{4}3 \left(\binom{n-4}{4}3 \mathbb{P}\left(\textrm{$4$-cycle}\right)^2  +O(n^3)\right)\\
&= \left(\mathbb{E} \#\{C_4\}\right)^2 (1+o(1)),
\end{align*}
by using in the second line the fact that $\mathbb{P}\left(\textrm{$4$-cycle}\right)^2=e^{-2\varepsilon^2 d}\gg \frac{1}{n}$. We deduce that $\textrm{Var}\#\{C_4\}=o\left(\left(\mathbb{E}\#\{C_4\}\right)^2\right)$ and that the number of $4$-cycles in $G_{[n],d,p}$ is concentrated about its mean: whp \[\#\{C_4\}=(1+o(1))\frac{n(n-1)(n-2)(n-3)}{8}q^4.\]
Our proposition then follows from the quasirandomness theorem of Chung, Graham and Wilson~\cite{ChungGrahamWilson89}.
\end{proof}

Thus in this case the binomial model behaves like an Erd\H{o}s--R\'enyi random graph. It is not hard to use this to show that the uniform model with parameter $k=d-O(\sqrt{d})$ is also quasirandom. As such, there is nothing very novel about our models in this range.

\subsection{Further questions}
There are a number of further natural questions to ask about our models.

Two such questions concern connectivity and component evolution.
 We address these in a forthcoming paper~\cite{FalgasRavryMarkstrom13b}, in which we show that the connectivity threshold for the binomial model with $p\leq \frac{1}{3}$ is $t_{\textrm{connect}}=\log \frac{2}{1+p}$, coinciding with the covering threshold. On the other hand, for the range $p> \frac{1}{3}$ we relate the connectivity of the binomial model to that of the uniform model for a suitable choice of parameter $k$.

It remains an open problem to understand independence in the context of subcube intersection graphs. We do not know how to track the independence number of our models, and more generally we do not know how to perform anything but the most basic computations involving non-edges. Similarly, we have a lower bound on the chromatic number coming from the clique number, but no non-trivial upper bound.

Finally, given our motivation for studying subcube intersection graphs, it would be desirable to allow some bias in the distribution of the feature subcubes. For instance, in a polarised society with two-party politics it is likely citizens will have either mostly zeroes (`left-wing opinions') or mostly ones (`right-wing opinions') amongst their opinions. It would then be interesting to study the change in the behaviour of our models as the polarisation becomes stronger.

\section*{Acknowledgements}
The authors would like to thank Joel Larsson for helpful comments on the first version of this article.

\section*{Appendix A: proof of Lemma~\ref{technical lemma}}

\begin{lemma*}
For all $y \in [0,1]$ and all integers $0\leq i \leq s$, the following inequality holds:
\begin{align*}\left(2+2y^i-4y^s+y^{2s-i}\right)^s \leq \left(2-y^s\right)^{2s-i}. \end{align*}
\end{lemma*}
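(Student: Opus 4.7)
The plan is to work with the logarithm of the desired inequality, treating $i$ as a continuous variable on $[0,s]$. Set $B(y,i) := 2 + 2y^i - 4y^s + y^{2s-i}$; the aim is to show
\[
F(y,i) := (2s-i)\log(2-y^s) - s\log B(y,i) \geq 0.
\]
Direct substitution gives $B(y,0) = (2-y^s)^2$ and $B(y,s) = 2-y^s$, so $F(y,0) = F(y,s) = 0$; the tightness at the endpoints of the $i$-interval is the clue to look for a concavity argument in $i$. The cases $y\in\{0,1\}$ reduce to easy one-line checks, so assume $y \in (0,1)$.

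Since $(2s-i)\log(2-y^s)$ is affine in $i$, it suffices to prove $\log B(y,i)$ is convex in $i$, i.e.\ $BB'' \geq (B')^2$, where primes denote $\partial/\partial i$. Differentiation yields $B' = (2y^i - y^{2s-i})\log y$ and $B'' = (2y^i + y^{2s-i})(\log y)^2$. Writing $a = 2y^i$, $b = y^{2s-i}$, $c = 2-4y^s$ (so $B = a+b+c$) and using the identity $(a+b)(a+b+c) - (a-b)^2 = 4ab + (a+b)c$, the convexity reduces, after dividing by $(\log y)^2 > 0$, to
\[
8y^{2s} + (2y^i + y^{2s-i})(2-4y^s) \geq 0. \qquad (\star)
\]

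The main obstacle is establishing $(\star)$ in the regime $y^s > 1/2$, where the second summand is negative. Here I would observe that $i\mapsto 2y^i + y^{2s-i}$ is decreasing on $[0,s]$ (its derivative in $i$ equals $(2y^i - y^{2s-i})\log y$, and $2y^i \geq y^{2s-i}$ throughout since $y^{2(s-i)} \leq 1 < 2$), so the left-hand side of $(\star)$ is minimised at $i = 0$. Setting $z = y^s \in (1/2,1]$, $(\star)$ then reduces to the elementary cubic inequality $2z^3 - 5z^2 + 4z - 2 \leq 0$, which is routine to verify: its stationary points on $[1/2,1]$ are $z=2/3$ and $z=1$, giving values $-26/27$ and $-1$, so the maximum on the interval is negative. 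In the complementary regime $y^s \leq 1/2$ both summands of $(\star)$ are non-negative and there is nothing to prove.

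Combining these steps, $\log B(y,\cdot)$ is convex on $[0,s]$, so $F(y,\cdot)$ is concave there; together with the endpoint vanishing $F(y,0) = F(y,s) = 0$, this forces $F(y,i) \geq 0$ for every real $i \in [0,s]$, and in particular for every integer $i$ in that range, which is the lemma.
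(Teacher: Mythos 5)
Your proof is correct, and it takes a genuinely different route from the paper's. The paper fixes $i$ and studies the ratio $g(y)=(2-y^s)^{2s-i}/\left(2+2y^i-4y^s+y^{2s-i}\right)^s$ as a function of $y$, showing $g'\leq 0$ on $[0,1]$ and anchoring at $g(1)=1$; this requires factorising $g'$ and then a further sub-claim that an auxiliary polynomial $h_2$ is positive, handled by locating its stationary point. You instead fix $y$, treat $i$ as a continuous variable, observe that the inequality is an identity at both $i=0$ and $i=s$ (since $B(y,0)=(2-y^s)^2$ and $B(y,s)=2-y^s$), and show $F(y,\cdot)$ is concave on $[0,s]$ by proving $\log B$ is convex in $i$; the criterion $BB''\geq(B')^2$ collapses, via the identity $(a+b)(a+b+c)-(a-b)^2=4ab+(a+b)c$, to the single inequality $(\star)$, and a monotonicity observation reduces that to the one-variable polynomial bound $2z^3-5z^2+4z-2\leq 0$ on $(1/2,1]$. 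All the computations check out ($B'$, $B''$, the reduction of $(\star)$ at $i=0$ to the stated cubic, and the values $-26/27$ and $-1$ at the stationary points). Your approach is arguably more conceptual --- it explains the inequality as ``tight at both ends of the $i$-range, concave in between'' --- and both arguments in fact prove the statement for all real $i\in[0,s]$, not just integers. One cosmetic point: to conclude that the maximum of $2z^3-5z^2+4z-2$ on $[1/2,1]$ is negative you should also record the endpoint value at $z=1/2$, which is $-1$; since the cubic is increasing on $(1/2,2/3)$ this adds nothing, but it completes the max-over-a-closed-interval argument.
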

\begin{proof}
This is a simple (albeit lengthy) exercise in calculus. Let $h_1(y)=2+2y^i-4y^s+y^{2s-i}$. Since we may write $h_1(y)$ as
\begin{align*}
h_1(y)=2(1-y^s)+ 2y^i(1-y^{s-i})+y^{2s-i},
\end{align*} 
we have $h_1(y)>0$ for $y\in [0,1]$. The function 
\[g(y)= \frac{\left(2-y^s\right)^{2 s-i}}{\left(2+2 y^i-4 y^s+y^{2s-i}\right)^s}\]
is thus well-defined and differentiable in the interval $[0,1]$.

We want to show that $g(y)\geq 1$ for all $y \in [0,1]$. If $i=s$ or if $i=0$ the function $g$ is identically $1$ on the interval $[0,1]$, in which case we have nothing to prove. Assume therefore in what follows that $0<i<s$.  We have $g(0)=2^{s-i}> 1$ and $g(1)=1$, so we will be done if we can show that the function $g$ is monotone decreasing in the interval $[0,1]$. We compute the derivative of $g$:
\begin{align*}
g'(y)&=\frac{1}{(h_1(y))^{s+1}}\Bigl( -s (h_1(y))' (2-y^s)^{2s-i} + h_1(y) ((2-y^s )^{2s-i})'\Bigr)\\
&= \frac{-sy^{i-1} (2-y^s)^{2s-i-1}}{(h_1(y))^{s+1}}\Bigl(4i -(4s+2i)y^{s-i}+(4s-2i)y^{2s-2i}+ (4s-4i)y^s -(4s-4i)y^{2s-i} \Bigr)\\
&=-\frac{sy^{i-1}(2-y^s)^{2s-i-1} (1-y^{s-i})}{(h_1(y))^{s+1}}\Bigl(4i -(4s-2i)y^{s-i} +(4s-4i)y^s\Bigr). 
\end{align*}
We claim $g'(y)\leq 0$  for all $y \in [0,1]$. Clearly
\[-\frac{sy^{i-1}(2-y^s)^{2s-i-1} (1-y^{s-i})}{(h_1(y))^{s+1}} \leq 0\]
for all $y\in[0,1]$. Thus the only factor we have left to consider is
\begin{align*}
h_2(y)&= 4 i-(4s-2i) y^{s-i}+(4s-4i) y^s.
\end{align*}
\begin{claim*} 
$h_2(y)> 0$ for all $y \in [0,1]$.
\end{claim*}
The claim above implies that $g'(y)\leq 0$ for all $y \in [0,1]$, whence $g(y) \geq g(1)=1$ for all $y\in [0,1]$, as desired. 
\begin{proof}[Proof of Claim] 
We have $h_2(0)=4i$ and $h(1)=2i$, both of which are strictly positive. 
We differentiate $h_2$ to check for other extrema inside the interval $[0,1]$.
\[h_2'(y)=(s-i)y^{s-i-1}\left(-(4s-2i)+4s y^i \right) .\]
In addition to $y=0$, $h_2'$ has one root in the interval $[0,1]$, namely
\[y_{\star}= \left(1-\frac{i}{2s}\right)^{\frac{1}{i}}.\]
At $y_{\star}$, we have
\[h_2(y_{\star})= 4 i\left(1 - \left(1-\frac{i}{2s}\right)^{\frac{s}{i}}\right)>0.\]
Thus for all $y \in [0,1]$, 
\[h_2(y)\geq \min\Bigl(h_2(0), h_2(y_{\star}), h_2(1)\Bigr)>0,\]
establishing the claim. 
\end{proof}
This completes the proof of Lemma~\ref{technical lemma}.
 \end{proof} 
\end{document}